\DeclareMathOperator{\Hom}{Hom} 
 \DeclareMathOperator{\rk}{rk}
\DeclareMathOperator{\diag}{diag}
 \DeclareMathOperator{\Fl}{Fl}
\DeclareMathOperator{\Tr}{Tr} \DeclareMathOperator{\grad}{grad}
\DeclareMathOperator{\Hess}{Hess}
\newcommand{\Zo}{\mathbb{Z}}
\newcommand{\Ro}{\mathbb{R}}
\newcommand{\Co}{\mathbb{C}}
\newcommand{\Zt}{\Zo_2}
\newcommand{\RP}{\mathbb{R}P}
\newcommand{\CP}{\mathbb{C}P}
\newcommand{\twist}{\xi}
\newcommand{\hmin}{h_{\min}}
\newcommand{\hm}{h_{\max}}
\newcommand{\uu}{\mathfrak{u}}
\newcommand{\ttt}{\mathfrak{t}}
\newcounter{stmcounter}[section]
\numberwithin{equation}{section}
\theoremstyle{plain}
\newtheorem{cor}[stmcounter]{Corollary}
\newtheorem{thm}[stmcounter]{Theorem}
\newtheorem{prop}[stmcounter]{Proposition}
\newtheorem{lem}[stmcounter]{Lemma}
\newtheorem{probl}[stmcounter]{Problem}
\theoremstyle{definition}
\newtheorem{defin}[stmcounter]{Definition}
\theoremstyle{remark}
\newtheorem{ex}[stmcounter]{Example}
\newtheorem{rem}[stmcounter]{Remark}
\begin{document}

\title{Manifolds of isospectral matrices and Hessenberg varieties}

\author{Anton Ayzenberg, Victor Buchstaber}
\address{V.A. Steklov Mathematical Institute, RAS, Moscow, Russia}
\email{ayzenberga@gmail.com}
\address{V.A. Steklov Mathematical Institute, RAS, Moscow, Russia}
\email{buchstab@mi.ras.ru}

\date{\today}
\thanks{This work is supported by the Russian Science Foundation under grant 14-11-00414.}
%
\subjclass[2010]{Primary 15B57, 37C25, 37C80, 57R91; Secondary
14M15, 14M25, 15B10, 37K10, 37C10, 37C80, 37D15, 55N91, 57R19}
\keywords{Hessenberg variety, Toda flow, isospectral matrices,
staircase matrices, GKM theory, complete flag variety}

\begin{abstract}
We consider the space $X_h$ of Hermitian matrices having staircase
form and the given simple spectrum. There is a natural action of a
compact torus on this space. Using generalized Toda flow, we show
that $X_h$ is a smooth manifold and its smooth type is independent
of the spectrum. Morse theory is then used to show the vanishing
of odd degree cohomology, so that $X_h$ is an equivariantly formal
manifold. The equivariant and ordinary cohomology of $X_h$ are
described using GKM-theory. The main goal of this paper is to show
the connection between the manifolds $X_h$ and the semisimple
Hessenberg varieties well-known in algebraic geometry. Both the
spaces $X_h$ and Hessenberg varieties form wonderful families of
submanifolds in the complete flag variety. There is a certain
symmetry between these families which can be generalized to other
submanifolds of the flag variety.
\end{abstract}

\maketitle


\section{Introduction}

A Hessenberg function is a function $h\colon[n]\to[n]$ such that
$h(i)\geqslant i$ and $h(i+1)\geqslant h(i)$. Given such a
function and a linear operator $S\colon\Co^n\to\Co^n$ one can
define a subvariety $Y_h$ of the variety $\Fl_n$ of complete
complex flags in $\Co^n$:
\[
\Hess_{h,S}=\{V_\bullet\in\Fl_n\mid SV_i\subset V_{h(i)}\},
\]
called Hessenberg variety. If $S$ is diagonalizable with distinct
eigenvalues, it is known that $Y_h=\Hess_{h,S}$ is nonsingular,
and its smooth type is independent of $S$. The manifold $Y_h$
carries an action of an algebraical torus $(\Co^\times)^n$ induced
by the standard action of $(\Co^\times)^n$ on $\Co^n$. Therefore
there is an action of a compact torus $T^n\subset(\Co^\times)^n$
on $Y_h$.

There is another natural space associated with the given
Hessenberg function $h$: the space $X_h$ of Hermitian matrices
which have a given simple spectrum
$\lambda=(\lambda_1,\ldots,\lambda_n)$ and staircase form
determined by $h$. The manifold of all Hermitian matrices with
spectrum $\lambda$ can be identified with complete flag manifold,
therefore the spaces $X_h$ form a family of subspaces of $\Fl_n$.
Conjugation by diagonal matrices determines a natural action of a
compact torus $T^n$ on $X_h$.

For the case $\hmin(i)=i+1$, $i=1,\ldots,n-1$, $h(n)=n$, the
manifold $Y_{\hmin}$ is known to be the toric variety
corresponding to root system of type $A_n$. The space $X_{\hmin}$
is the space of tridiagonal isospectral Hermitian matrices. This
space is known to be a quasitoric manifold over the permutohedron,
whose characteristic function is determined by a proper coloring
of facets (see details in \cite{DJ}). Two conclusions can be drawn
from this example. First, the manifolds $X_{\hmin}$ and
$Y_{\hmin}$ are different (see example \ref{exXYareDifferent}).
Second, the spaces $X_{\hmin}$ and $Y_{\hmin}$ are closely
related: their equivariant cohomology rings are isomorphic, and
their orbit spaces are combinatorially the same. This particular
case was studied in details in the work of Bloch, Flaschka, and
Ratiu \cite{BFR}.

In this paper we show a similar connection between $X_h$ and $Y_h$
in general. First we show that the space $X_h$ of staircase
isospectral matrices is a smooth manifold, and its smooth type is
independent of the spectrum $\lambda$ (see Theorem
\ref{thmXhSmoothEvenCells}). This statement is a folklore: the
essential idea of using the Toda flow is well-known, and certain
versions of this statement were proved in several papers (see e.g.
\cite{Tomei} for real tridiagonal matrices, and \cite{dMP} for
general real staircase matrices).

Applying Morse theory we show that odd degree cohomology of $X_h$
vanish. As a corollary, the manifold $X_h$ is equivariantly
formal, and its equivariant cohomology ring can be described by
GKM-theory. The corresponding results for the manifold $Y_h$ were
proved in \cite{Tym}.

Finally, we show the relation between $X_h$ and $Y_h$. Let $U(n)$
be the unitary group. The manifold $U(n)$ admits two actions of a
compact torus $T^n$ defined by left and right multiplication. Each
of these actions is free and the orbit space in both cases is a
manifold of complete complex flags in $\Co^n$.
\begin{equation}\label{eqTwoMaps}
\Fl_n\cong T^n\backslash
U(n)\stackrel{p}{\longleftarrow}U(n)\stackrel{q}{\longrightarrow}
U(n)/T^n\cong \Fl_n.
\end{equation}
The relation between $X_h$ and $Y_h$ is fairly simple and is given
by the following

\begin{thm}\label{thmInUnitary}
There exists a submanifold $Z_h\subseteq U(n)$ invariant under
both left and right actions of $T^n$ such that its left quotient
is $X_h$, and its right quotient is $Y_h$.
\end{thm}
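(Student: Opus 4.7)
The plan is to take $Z_h$ to be the preimage of $X_h$ under the natural map $\Phi \colon U(n) \to M_\lambda$ given by $\Phi(U) = U^{*}\Lambda U$, where $\Lambda = \diag(\lambda_1,\ldots,\lambda_n)$ and $M_\lambda$ denotes the manifold of Hermitian matrices with spectrum $\lambda$. Explicitly,
\[
Z_h := \Phi^{-1}(X_h) = \{U \in U(n) \mid U^{*}\Lambda U \text{ has } h\text{-staircase form}\}.
\]
Since $\Lambda$ commutes with every $T \in T^n$, one has $\Phi(TU) = U^{*}T^{*}\Lambda TU = \Phi(U)$, so $\Phi$ is left $T^n$-invariant. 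Under the right action $\Phi(UT) = T^{*}\Phi(U)T$ is a diagonal conjugation, which preserves the zero-pattern of any matrix; hence $Z_h$ is invariant under both the left and the right $T^n$-actions on $U(n)$.

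Smoothness of $Z_h$ follows from the fact that $\Phi$ is a submersion: it is the composition of the principal left $T^n$-bundle $U(n) \to T^n\backslash U(n)$ with the standard diffeomorphism $T^n\backslash U(n) \xrightarrow{\cong} M_\lambda$, $[U] \mapsto U^{*}\Lambda U$ (bijectivity uses that the $\Lambda$-stabilizer under conjugation is exactly $T^n$ because the $\lambda_i$ are distinct). Combined with Theorem \ref{thmXhSmoothEvenCells}, which states that $X_h$ is a smooth submanifold of $M_\lambda$, this yields that $Z_h = \Phi^{-1}(X_h)$ is a smooth submanifold of $U(n)$. By construction, the left quotient $T^n\backslash Z_h$ is the preimage of $X_h$ under the diffeomorphism $T^n\backslash U(n) \cong M_\lambda$, hence is $X_h$ itself.

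It remains to identify the right quotient with $Y_h$. Using the identification $U(n)/T^n \cong \Fl_n$ sending $[U]$ to the flag $V_i = UF_i$ with $F_i = \langle e_1,\ldots,e_i\rangle$, the Hessenberg condition $\Lambda V_i \subseteq V_{h(i)}$ rewrites as $U^{*}\Lambda U \cdot F_i \subseteq F_{h(i)}$ for all $i$, which is exactly the $h$-staircase condition on $\Phi(U)$. Therefore $Z_h/T^n = Y_h$. The whole argument is a direct verification, and the only real obstacle is bookkeeping: one must recognize that the two identifications of $\Fl_n$ with $T^n\backslash U(n)$ and with $U(n)/T^n$ differ by $U \mapsto U^{*}$, and that the single condition ``$U^{*}\Lambda U$ is $h$-staircase'' simultaneously encodes the isospectral staircase condition on the left quotient and the Hessenberg condition on the right quotient.
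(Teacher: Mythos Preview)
Your proof is correct and follows essentially the same route as the paper: you define $Z_h=\{U\in U(n)\mid U^{-1}\Lambda U\in M_h\}$, observe that left multiplication by $T^n$ leaves $U^{-1}\Lambda U$ unchanged (so the left quotient is $X_h$), and that under the identification $[U]\mapsto (U E_i)_i$ the staircase condition on $U^{-1}\Lambda U$ becomes exactly $\Lambda V_i\subset V_{h(i)}$ (so the right quotient is $Y_h$). Your extra care in deducing smoothness of $Z_h$ from the submersion $\Phi$ and Theorem~\ref{thmXhSmoothEvenCells} is a welcome detail that the paper leaves implicit.
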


This result easily implies the homeomorphism $X_h/T^n\cong
Y_h/T^n$, the isomorphism $H^*_T(X_h)\cong H^*_T(X_h)$ of
equivariant cohomology rings, and the coincidence of Betti numbers
of $X_h$ and $Y_h$, see Theorem \ref{thmXYconnection}.

The considerations of this paper make sense in the real case as
well. We have a real Hessenberg variety
\[
Y_h^\Ro=\{V_\bullet\in\Fl_n^\Ro\mid SV_i\subset V_{h(i)}\}
\]
sitting inside the manifold $\Fl_n^\Ro$ of complete flags in
$\Ro^n$. We also have the space $X_h^\Ro$ of isospectral staircase
real symmetric matrices. The smoothness of $X_h^\Ro$ follows from
the properties of Toda flow, see \cite{dMP}. The spaces $X_h^\Ro$
and $Y_h^\Ro$ are intimately related: there exists a submanifold
$Z_h^\Ro$ inside the orthogonal group $O(n)$ which covers both
$X_h^\Ro$ and $Y_h^\Ro$.

It should be mentioned here, that real manifolds $X_h^\Ro$ and
$Y_h^\Ro$ are different in general. In the work \cite{dMP}, where
the manifold $Y_h^\Ro$ was introduced, it was called a real
Hessenberg manifold. We consider this naming a certain inaccuracy.

One of the goals of the current paper is to show that there are
actually two types of manifolds which are different from
topological point of view. The manifold $Y_h$ is an algebraic
variety, while $X_h$ is not (in general). The manifold $X_h$ has
stably trivial tangent bundle, while $Y_h$ has not (again, in
general). The Betti numbers of $X_h$ and $Y_h$ coincide, however
the multiplicative structures of their cohomology rings may be
different.

Different methods are used to prove the smoothness of $X_h$ and
$Y_h$. Smoothness of $X_h$ follows from the properties of
generalized Toda flow. Smoothness of $Y_h$ follows from the
properties of Bialynicki-Birula decomposition. It seems that there
should exist a dynamics on $Z_h$ which covers the Toda flow (or
its generalizations) on $X_h$ and Bialynicki-Birula flow on $Y_h$.

It should be noted that smoothness of one of the spaces $X_h$,
$Y_h$ implies the smoothness of the other according to Theorem
\ref{thmInUnitary}. Indeed, we have a diagram \eqref{eqTwoMaps} of
two smooth fibrations. If $M$ is any $T$-invariant smooth manifold
in $U(n)/T^n$, then $q^{-1}(M)\subset U(n)$ is a smooth manifold,
and, therefore, $\tilde{M}=pq^{-1}(M)$ is also smooth. This
construction allows to make $X_h$ out of $Y_h$ and vice versa.

Following this recipe, for every $T$-invariant smooth submanifold
$M$ in a flag manifold one can construct its ``twin'' $\tilde{M}$.
This can be used to construct examples of well-behaved manifolds
having the same orbit spaces but different characteristic data.
The reasoning works in the real case as well.

\section{Manifolds of sparse isospectral matrices}\label{secGeneral}

\paragraph{Isospectral matrices}

Let $M_n$ denote the vector space of all Hermitian matrices of
size $n\times n$. We have $\dim_\Ro M_n=n^2$. For a given set
$\lambda = \{\lambda_1,\ldots,\lambda_{n}\}$ of pairwise distinct
real numbers consider the subset $M_{\lambda}\subset M_n$ of all
matrices which have eigenvalues $\{\lambda_1,\ldots,\lambda_{n}\}$
(assume $\lambda_1<\lambda_2<\cdots<\lambda_{n}$).

Let $U(n)$ be the group of unitary matrices and $T^{n}\subseteq
U(n)$ be the compact torus, that is the subgroup of diagonal
unitary matrices
\[
T^{n}=\left\{D=\diag(t_1,\ldots,t_{n}),
 t_i\in \Co, |t_i|=1\right\}
\]

Note that the group $U(n)$ acts on $M_{n}$ by conjugation.
Multiplying Hermitian matrix by $\sqrt{-1}$, we get a
skew-Hermitian matrix, therefore such an action can be identified
with the adjoint action of Lie group $U(n)$ on its tangent Lie
algebra. For a simple spectrum $\lambda$ the subset $M_\lambda$ is
identified with the principal orbit of the adjoint action.
Therefore $M_\lambda$ is diffeomorphic to a complete flag variety
$\Fl_{n}=U(n)/T^{n}$. Moreover, there is a trivial smooth
fibration
\[
p\colon M_n\setminus\Sigma\to C,
\]
where $\Sigma$ is the set of Hermitian matrices with multiple
eigenvalues, $C=\{(\lambda_1,\ldots,\lambda_n)\in\Ro^{n}\mid
\lambda_1<\cdots<\lambda_n\}$ is a Weyl chamber, and $p$ maps the
matrix to its eigenvalues in increasing order. The fiber of $p$
over the point $\lambda$ is the manifold $M_\lambda$. We have
$\dim_\Ro M_{\lambda}=n(n-1)$. The group $T^{n}$ acts on $M_{n}$
by conjugation: $A\mapsto DAD^{-1}$. In coordinate notation we
have
\[
(a_{ij})_{\substack{i=1,\ldots,n\\j=1,\ldots,n}}\mapsto
(t_it_j^{-1}a_{ij})_{\substack{i=1,\ldots,n\\j=1,\ldots,n}}
\]
Scalar matrices commute with every matrix $A$, therefore the
diagonal subgroup of torus acts non-effectively. The fixed points
of the action of $T^n$ on $M_{\lambda}$ are the diagonal matrices
with spectrum $\lambda$, i.e. the matrices of the form
$A_\sigma=\diag(\lambda_{\sigma(1)},\lambda_{\sigma(2)},\ldots,\lambda_{\sigma(n)})$
for all possible permutations $\sigma\in \Sigma_{n}$.

Let $\Gamma$ be a simple graph (that is a finite graph without
multiple edges and loops), which has the vertex set
$\{1,\ldots,n\}$, and edge set $E$. We associate with $\Gamma$ a
vector subspace in the set of all Hermitian matrices:
\[
M_\Gamma=\{A\in M_{n}\mid a_{ij}=0, \mbox{ if } \{i,j\}\notin E\}.
\]
Note that the action of $T^n$ by conjugation preserves the set
$M_\Gamma$. Also let
\[
M_{\Gamma,\lambda}=M_{\Gamma}\cap M_{\lambda}.
\]
The space $M_{\Gamma,\lambda}$ is called \emph{the space of
isospectral sparse matrices of type} $\Gamma$. The torus $T^n$
acts on $M_{\Gamma,\lambda}$ and we have

\begin{equation}\label{eqDimM}
\dim M_{\Gamma,\lambda}=2|E|
\end{equation}

\begin{ex}
If $\Gamma$ is the complete graph on $\{1,\ldots,n\}$, we have
$M_{\Gamma,\lambda}=M_\lambda\cong\Fl_n$.
\end{ex}

\begin{ex}
If $\Gamma$ is the graph with no edges, then $M_{\Gamma,\lambda}$
is the finite set of cardinality $(n+1)!$ consisting of all
diagonal matrices with spectrum $\lambda$.
\end{ex}

\begin{rem}\label{remBlockMatrices}
The case of disconnected graphs can be reduced to the connected
case. Let $\Gamma_1,\ldots,\Gamma_k$ be the connected components
of graph $\Gamma$, having vertex sets $A_1,\ldots,A_k\subset
\{1,\ldots,n\}$. Let $\Omega$ be the set of all possible
partitions of the set $\{\lambda_1,\ldots,\lambda_n\}$ into
nonintersecting subsets $S_i$ of cardinalities $|A_i|$,
$i=1,\ldots,k$. Then we have $M_{\Gamma,\lambda}=\bigsqcup_\Omega
\prod_{i=1}^k M_{\Gamma_i,S_i}$.
\end{rem}

In the following we consider only connected graphs.

\begin{probl}
Is it true that for each graph $\Gamma$ the space
$M_{\Gamma,\lambda}$ is a smooth submanifold whose diffeomorphism
type is independent of the spectrum $\lambda$?
\end{probl}

In the next section we describe the classical situation in which
the smoothness of $M_{\Gamma,\lambda}$ is proved using generalized
Toda flow.

\section{Staircase Hermitian matrices}\label{secStaircaseMatricesGeneral}

Let $h\colon [n]\to[n]$ be the function satisfying the condition
$h(i)\geqslant i$ for $0\leqslant i\leqslant n$, and
$h(i+1)\geqslant h(i)$ for $0\leqslant i\leqslant n-1$. Such
functions are called \emph{Hessenberg functions}. We write
Hessenberg function by listing its values:
$(h(1),h(2),\ldots,h(n))$.

For a Hessenberg function $h$ consider the vector subspace $M_h$
of sparse Hermitian matrices of the form
\[
M_h=\{A\in M_n\mid a_{ij}=0, \mbox{ if either } j>h(i), \mbox{ or
} i>h(j)\}
\]
Obviously, $M_h$ coincides with $M_{\Gamma_h}$, where $\Gamma_h$
is the graph on the set $\{1,\ldots,n\}$ with edge set
$E=\{(i,j)\mid i\leqslant j\leqslant h(i)\}$.

\begin{rem}
The graphs $\Gamma_h$ arising from Hessenberg functions are known
in combinatorics. According to \cite{Mert}, the $\Gamma$ is
isomorphic to $\Gamma_h$ for some Hessenberg function $h$ if and
only if $\Gamma$ is an \emph{indifferent graph} (otherwise called
\emph{proper interval graph}), see definition in \cite{Mert}. It
is known that any indifferent graph is chordal, but not the
converse.
\end{rem}

We call the Hessenberg function $h$ \emph{indecomposable} if it
satisfies the relation $h(i)>i$ for all $i=1,\ldots,n-1$. If $h$
is not indecomposable, the corresponding graph $\Gamma_h$ is
disconnected, therefore the corresponding space breaks down into
the disjoint union of products of smaller spaces, see remark
\ref{remBlockMatrices}. Therefore it is sufficient to consider
only indecomposable Hessenberg functions.

\begin{rem}
Note that the number of indecomposable Hessenberg functions
$h\colon[n]\to[n]$ equals the $n$-th Catalan number
$C_n=\frac{1}{n+1}{2n\choose n}$.
\end{rem}

For a Hessenberg function $h\colon[n]\to [n]$, consider the
function $g\colon[n]\to [n]$
\[
g(i)=\min\{j\mid h(j)\geqslant i\}
\]
It can be seen that $g(i)\leqslant i$ for all $i\in[n]$ and
\begin{equation}\label{eqHGinterchange}
h(i)\geqslant j \Longleftrightarrow i\geqslant g(j).
\end{equation}

The element $A_{i,j}$ of matrix $A\in M_h$ is allowed to be
nonzero only if $g(i)\leqslant j\leqslant h(i)$.

\begin{ex}
$6\times 6$-matrices, corresponding to Hessenberg function
$(3,3,5,6,6,6)$ have the form
\[
\begin{pmatrix}
\ast & \ast & \ast & 0 & 0 & 0 \\
\ast & \ast & \ast & 0 & 0 & 0 \\
\ast & \ast & \ast & \ast & \ast & 0 \\
0 & 0 & \ast & \ast & \ast & \ast \\
0 & 0 & \ast & \ast & \ast & \ast \\
0 & 0 & 0 & \ast & \ast & \ast
\end{pmatrix}
\]
The values of dual function $g$ are $1,1,1,3,3,4$.
\end{ex}

\begin{ex}
For the function
\begin{equation}\label{eqPlusOneFunctionH}
\hmin(i)=i+1, i=1,\ldots,n-1,\qquad \hmin(n)=n.
\end{equation}
the space $M_{\hmin}$ is the space of tridiagonal Hermitian
matrices.
\end{ex}

Consider the space
\[
X_h=M_{h,\Lambda}=M_h\cap M_\Lambda,
\]
that is the set of staircase Hermitian matrices having given
simple spectrum $\lambda_1<\cdots<\lambda_n$. The compact torus
$T^n$ acts on $M_{h,\Lambda}$ as before, with the diagonal
$\Delta(T^1)$ acting non-effectively. If $h$ is indecomposable,
then the induced action of $T^n/\Delta(T^1)$ is effective. Let
$d=d(h)=\sum_{i=1}^n(h(i)-i)$.

Consider the real version of these constructions. Let $X_h^\Ro$ be
the set of staircase symmetric real matrices with the given simple
spectrum. The group $\Zt^n$ acts on this space.

\begin{thm}\label{thmXhSmoothEvenCells}
The space $X_h$ is a smooth $2d$-dimensional manifold, whose
smooth type is independent of $\Lambda$. The space $X_h^\Ro$ is a
smooth $d$-dimensional manifold, whose smooth type is independent
of $\Lambda$. Odd cohomology of the space $X_h$ vanish.
\end{thm}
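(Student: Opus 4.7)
The plan is to use the generalized Toda flow to get smoothness and spectrum-independence of $X_h$ and $X_h^\Ro$, and then to apply equivariant Morse theory with a natural $T^n$-invariant function to kill odd cohomology.

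First, I would introduce the generalized Toda flow $\dot A = [A,\pi_h(A)]$, where $\pi_h(A)$ is the skew-Hermitian matrix constructed from the off-diagonal staircase entries of $A$ (for the classical case, $\pi_h(A) = A_{>h} - A_{<h}$ restricted to the allowed band). Two computations are required: (a) the flow is isospectral, which is standard from the Lax-pair form $\dot A = [A,\cdot]$; (b) the flow preserves the zero pattern defining $M_h$, which is a direct entry-wise check using that $\pi_h(A)$ has support inside the staircase. So the flow restricts to $X_h$. Its fixed points are exactly the $n!$ diagonal matrices $A_\sigma$, and the flow has stable/unstable manifolds at each $A_\sigma$ which exponentially attract/repel and provide smooth charts of real dimension $2d$, giving the smooth manifold structure on $X_h$. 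The real case is identical, working inside the real symmetric subspace and yielding dimension $d$.

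For independence of the spectrum, I would consider the total space $\tilde X_h = \{A\in M_h\mid A\text{ has simple real spectrum}\}$ together with the spectrum map $\sigma\colon \tilde X_h \to C$ into the Weyl chamber. Smoothness of $X_h$ plus $T^n$-equivariance of $\sigma$ lets one check that $\sigma$ is a submersion at each $A_\sigma$ (where varying the diagonal entries surjects onto $T_\lambda C$), and then propagate transversality along the $T^n$-orbits and along Toda trajectories, which fill out all of $\tilde X_h$. Since $C$ is convex, $\sigma$ is a trivial smooth fiber bundle, so all fibers $X_h=\sigma^{-1}(\lambda)$ are diffeomorphic; the same argument gives the real statement.

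For the vanishing of odd cohomology, I would fix $C=\diag(c_1,\ldots,c_n)$ with $c_1<\cdots<c_n$ and take the $T^n$-invariant Morse function $f(A) = \tr(AC)$ on $X_h$. Up to sign, the generalized Toda flow is the gradient flow of $f$ for an appropriate $T^n$-invariant metric, so the critical set of $f$ is exactly the $T^n$-fixed set $\{A_\sigma\}_{\sigma\in\Sigma_n}$, with each critical point isolated. The tangent space $T_{A_\sigma} X_h$ is naturally spanned by Hermitian pairs $E_{ij}+E_{ji},\; \sqrt{-1}(E_{ij}-E_{ji})$ for edges $\{i,j\}\in E(\Gamma_h)$, which group into complex lines of real dimension $2$; each such line is a $T^n$-weight space, and the Hessian of $f$ is a real scalar on that complex line of sign $\sgn\bigl((c_i-c_j)(\lambda_{\sigma(i)}-\lambda_{\sigma(j)})\bigr)$. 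Thus every Morse index is even, and $X_h$ admits a CW structure with only even-dimensional cells, giving $H^{\mathrm{odd}}(X_h)=0$.

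The main obstacle is the third step: one must justify that $f$ is really Morse on $X_h$ with the expected critical set (no extra critical manifolds appear from the restriction), and that the Hessian at each $A_\sigma$ splits cleanly along the complex lines spanned by pairs of matrix units, so that indices come in pairs. Both points rest on identifying $T_{A_\sigma} X_h$ as a complex $T^n$-subrepresentation of $T_{A_\sigma}M_\lambda$ and on the compatibility of this complex structure with the Toda vector field; the verifications in step 1 (staircase preservation and Lax form) are what makes this compatibility work.
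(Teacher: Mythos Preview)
Your overall strategy matches the paper's---generalized Toda flow for smoothness and spectrum-independence, then a trace functional as a Morse function to get even indices---but there is a logical tangle in how you argue smoothness. The claim in your first paragraph that the stable/unstable manifolds of the flow ``provide smooth charts of real dimension $2d$, giving the smooth manifold structure on $X_h$'' does not work: the stable manifolds $W^s(A_\sigma)$, taken in the ambient vector space $M_h$, are cells that \emph{partition} $X_h$ rather than open charts that cover it, and you cannot invoke the stable manifold theorem on $X_h$ itself before knowing it is a manifold. You then feed this purported smoothness into your second paragraph (``Smoothness of $X_h$ plus $T^n$-equivariance of $\sigma$ \dots''), which makes the argument circular.

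The fix is already implicit in your own text: the submersion argument of your second paragraph stands on its own and yields both smoothness and spectrum-independence at once, with no need for the first paragraph. Work on the ambient space: the spectrum map $p\colon M_h\setminus\Sigma\to C$ is smooth, it is visibly a submersion at each diagonal matrix $A_\sigma$ (varying the diagonal entries surjects onto $T_\lambda C$), and the time-$t$ Toda maps are diffeomorphisms of $M_h$ commuting with $p$; since every trajectory eventually enters a neighborhood of some $A_\sigma$, $p$ is a submersion everywhere, hence a fiber bundle over the convex base $C$, and each fiber $X_h$ is a smooth manifold independent of $\lambda$. This is exactly how the paper proceeds (citing a local lemma of Tomei at the diagonal matrices and Nanda's asymptotic result for the transport). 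For the cohomology step the paper does essentially what you propose, taking $C=\diag(1,\ldots,n)$ and explicitly constructing the Riemannian metric for which the Toda flow is the gradient of $\Tr(LN)$; instead of computing the Hessian of $f$ it linearizes the Toda flow at $A_\sigma$ to get $\dot b_{ij}=b_{ij}(\lambda_{\sigma(j)}-\lambda_{\sigma(i)})$ and reads off the even index directly.
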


The real case of this statement was proved in the work \cite{dMP},
where, in particular, the homology of $X_h^\Ro$ with
$\Zt$-coefficients were calculated.

Before proving the theorem we give some of its important
consequences. Recall the notion of Borel construction. Let $X$ be
a space with the action of a compact torus $T\cong T^k$. The Borel
construction of $X$ is the space $X_T=X\times_TET$, where $ET\to
BT$ is the classifying bundle of the group $T$. Note that
$BT\simeq (\CP^\infty)^k$. The natural map $p\colon X_T\to BT$ is
the Serre fibration with fiber $X$. Equivariant cohomology ring of
$X$ is the ring $H^*_T(X)=H^*(X_T)$. In the following we assume
that cohomology have coefficients in $\Zo$, unless stated
otherwise. The induced map $p^*\colon H^*(BT)\to H^*(X_T)$ makes
$H^*_T(X)=H^*(X_T)$ a module over the ring of polynomials
\[
H^*(BT)=H^*((\CP^\infty)^k)\cong \Zo[v_1,\ldots,v_k],\qquad \deg
v_i=2.
\]

\begin{defin}\label{definEquivFormal}
A space $X$ is called equivariantly formal if the Serre spectral
sequence of the fibration $p$
\[
E_2^{p,q}\cong H^p(BT;H^q(X))\cong H^p(BT)\otimes
H^q(X)\Rightarrow H^*_T(X)
\]
collapses at the second page.
\end{defin}

\begin{rem}\label{remOddIsZero}
Note that vanishing of odd cohomology of $X$ implies that
\[
E_2^{p,q}=0, \mbox{ if } p \mbox{ or } q \mbox{ are odd.}
\]
In this case all higher differential in the spectral sequence
vanish by trivial reasons, hence $X$ is equivariantly formal.
\end{rem}

\begin{cor}
The space $X_h$ with the action of the torus $T$ is equivariantly
formal.
\end{cor}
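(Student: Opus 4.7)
The plan is to derive this corollary as an immediate consequence of Theorem \ref{thmXhSmoothEvenCells} together with Remark \ref{remOddIsZero}. The theorem supplies the essential input: the odd-degree cohomology of $X_h$ vanishes. Since $H^*(BT)$ is a polynomial algebra on degree-$2$ generators, it too is concentrated in even degrees, so both tensor factors of the Serre $E_2$-term $H^p(BT)\otimes H^q(X_h)$ vanish whenever $p$ or $q$ is odd.

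First I would record this support condition on $E_2$. Next I would check that every higher differential $d_r\colon E_r^{p,q}\to E_r^{p+r,q-r+1}$, $r\geqslant 2$, vanishes automatically: starting from a bidegree where both $p$ and $q$ are even, the target $(p+r,q-r+1)$ has exactly one odd coordinate regardless of the parity of $r$, so it lies outside the support. Hence the spectral sequence degenerates at $E_2$, which by Definition \ref{definEquivFormal} is exactly the condition for $X_h$ to be equivariantly formal.

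There is essentially no obstacle here; the whole corollary is just the parity observation of Remark \ref{remOddIsZero} applied to the odd-vanishing statement from Theorem \ref{thmXhSmoothEvenCells}. The only point worth a brief sentence is that the result holds with integer coefficients, consistent with the convention fixed before Definition \ref{definEquivFormal}.
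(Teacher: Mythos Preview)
Your proposal is correct and follows exactly the paper's approach: the corollary is stated without separate proof, relying on the vanishing of odd cohomology from Theorem~\ref{thmXhSmoothEvenCells} and the parity argument spelled out in Remark~\ref{remOddIsZero}. You have simply made explicit the one-line reasoning the paper leaves implicit.
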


In equivariantly formal case the ordinary cohomology ring can be
obtained from the equivariant cohomology ring by changing the
coefficient ring:
\[
H^*(X)\cong H^*_T(X)\otimes_{H^*(BT)}\Zo=H^*_T(X)/(p^*(H^+(BT))).
\]
In many situations equivariant cohomology algebra of equivariantly
formal manifolds can be computed using GKM-theory, see \cite{GKM}
or \cite{Kur}.

\begin{prop}\label{propXhIsGKM}
The space $X_h$ is a GKM-manifold.
\end{prop}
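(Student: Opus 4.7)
The plan is to verify the three standard GKM axioms for $X_h$ under the $T^n$-action: equivariant formality, finiteness of the fixed-point set, and pairwise non-proportionality of the isotropy weights at each fixed point. Equivariant formality has just been supplied by the preceding corollary, so only the latter two need attention.

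First I would identify $X_h^T$. The $T^n$-action by conjugation on $M_n$ fixes precisely the diagonal matrices, and every diagonal matrix with spectrum $\lambda$ automatically lies in $M_h$, so $X_h^T=\{A_\sigma\mid\sigma\in\Sigma_n\}$ is finite of cardinality $n!$. Next, the tangent representation at each $A_\sigma$ must be pinned down. An entry-wise computation of the commutator $[\uu(n),A_\sigma]$ shows that $T_{A_\sigma}M_\lambda$ is the space of Hermitian matrices with vanishing diagonal; intersecting this with the linear subspace $M_h$ produces the space $V_\sigma$ of Hermitian matrices with zero diagonal satisfying $a_{ij}=0$ for $\{i,j\}\notin E(\Gamma_h)$, which has real dimension $2|E(\Gamma_h)|=2d$. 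By Theorem~\ref{thmXhSmoothEvenCells} this equals $\dim X_h$, so the evident inclusion $T_{A_\sigma}X_h\subseteq V_\sigma$ is forced to be an equality.

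Finally, since $T^n$ acts on matrix entries by $a_{ij}\mapsto t_it_j^{-1}a_{ij}$, each pair of off-diagonal entries in positions $(i,j),(j,i)$ with $\{i,j\}\in E(\Gamma_h)$ constitutes a one-dimensional complex $T^n$-subrepresentation of $T_{A_\sigma}X_h$ of weight $e_i-e_j$. Two such weights $e_i-e_j$ and $e_k-e_l$ are proportional only when $\{i,j\}=\{k,l\}$, so the tangent weights at every fixed point are pairwise non-proportional; this persists after passing to the effective torus $T^n/\Delta(T^1)$ since each such weight already annihilates the diagonal subalgebra. The GKM axioms are thereby verified. The principal (and quite mild) obstacle is the dimension-matching step that pins down $T_{A_\sigma}X_h$ explicitly; once this is in place, non-proportionality follows at once from the $A_{n-1}$ root-system combinatorics, and the preceding corollary supplies equivariant formality for free.
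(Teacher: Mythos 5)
Your verification of conditions (1)--(3) is sound, and your derivation of the tangent representation is a legitimate variant of the paper's: where the paper reads off $T_{A_\sigma}X_h=\bigoplus_{i<j\leqslant h(i)}V(\epsilon_{ij})$ from the linearization \eqref{eqLinearization} of the generalized Toda flow at $A_\sigma$, you compute $T_{A_\sigma}M_\lambda=[\uu(n),A_\sigma]$ as the zero-diagonal Hermitian matrices, intersect with the linear subspace $M_h$, and close the argument with the dimension count $\dim X_h=2d$. Both routes give the same weights $\epsilon_i-\epsilon_j$, whose pairwise non-collinearity is immediate.

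The gap is that you verify only ``the three standard GKM axioms,'' whereas the definition adopted in this paper has a fourth condition: every $T$-invariant two-dimensional submanifold consisting of at most one-dimensional orbits must contain a fixed point. Conditions (1)--(3) alone do not rule out, say, a $T$-invariant $2$-torus of one-dimensional orbits sitting away from the fixed point set, and without condition (4) the equivariant one-skeleton need not be a union of invariant $2$-spheres joining fixed points --- which is exactly the structure that the GKM graph and the computation of $H^*_T(X_h)$ in Proposition \ref{propEquivCohomXh} rely on. The paper disposes of this by observing that $X_h$ is a $T$-invariant submanifold of $\Fl_n\cong M_\lambda$, and $\Fl_n$ is known to be a GKM-manifold, so any offending surface inside $X_h$ would already violate condition (4) for $\Fl_n$. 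You should add this short step, or an equivalent argument, to complete the proof.
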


Recall the definition of the Hessenberg variety. Let $h\colon
[n]\to [n]$ be a Hessenberg function, $\Lambda$ the diagonal
$n\times n$-matrix with pairwise distinct values at diagonal (we
will assume these numbers real, however it is not necessary in the
definition). Let $\Fl_n$ be the complete flag variety
\[
\Fl_n=\{V_\bullet=(V_0\subset V_1\subset\cdots\subset V_n)\mid
V_i\subseteq \Co^n, \dim V_i=i\}.
\]

\begin{defin}
The (semisimple) Hessenberg variety corresponding to $h$ (and the
matrix $\Lambda$), is the subset $Y_h\subset \Fl_n$,
\[
Y_h=\{V_\bullet\in \Fl_n\mid SV_i\subseteq V_{h(i)}\}.
\]
\end{defin}

The algebraical torus $(\Co^\times)^n$ acts on $\Co^n$, inducing
the action of $(\Co^\times)^n$ on $\Fl_n$. The subvariety $Y_h$ is
preserved by this action.

It is known (see \cite{Tym} or \cite{AHM} and references therein),
that $Y_h$ is a smooth variety of complex dimension $d$, its odd
cohomology vanish, and $Y_h$ is a GKM-manifold with respect to the
action of a compact torus $T^n\subset (\Co^\times)^n$.

\begin{thm}\label{thmXYconnection}
For the manifold $X_h$ of isospectral staircase matrices and the
Hessenberg variety $Y_h$ the following hold
\begin{enumerate}
\item The orbit spaces are homeomorphic: $X_h/T\cong Y_h/T$,
moreover the homeomorphism preserves the orbit type filtration.
\item The unlabeled GKM-graphs of GKM-manifolds $X_h$ and $Y_h$ coincide.
\item Equivariant cohomology are isomorphic as graded rings:
\[
H^*_T(X_h)\cong H^*_T(Y_h).
\]
\item The Betti numbers of $X_h$ and $Y_h$ coincide:
\[
H^{2i}(X_h)\cong H^{2i}(Y_h)
\]
\end{enumerate}
\end{thm}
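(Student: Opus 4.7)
The plan is to derive the entire theorem from the double fibration structure afforded by Theorem \ref{thmInUnitary}. Write $T_L$ and $T_R$ for the two copies of $T^n$ acting on $U(n)$ by left and right multiplication; these commute, and both act freely on $U(n)$, hence also on the invariant submanifold $Z_h$. By assumption $X_h=T_L\backslash Z_h$ and $Y_h=Z_h/T_R$, and the $T$-action on $X_h$ of the theorem coincides with the residual action of $T_R$, while the $T$-action on $Y_h$ is the residual action of $T_L$.

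For part (1), both $X_h/T$ and $Y_h/T$ are canonically identified with the double coset space $T_L\backslash Z_h/T_R$, giving a natural homeomorphism. To check that this preserves the orbit type filtration, I would use that $T_L$ acts freely on $Z_h$: the stabilizer $\Stab_{T_L\times T_R}(z)$ of any point $z\in Z_h$ intersects $T_L\times\{1\}$ trivially, so it projects isomorphically onto a subgroup $H_z\le T_R$. This $H_z$ is exactly the stabilizer in $T_R$ of the orbit $T_Lz\in X_h$. The symmetric argument with the free $T_R$-action shows the corresponding stabilizer in $T_L$ of $zT_R\in Y_h$ is obtained from the same $\Stab_{T_L\times T_R}(z)$ by the other projection; under the canonical identification $T_L\cong T^n\cong T_R$ the two subgroups coincide, so the filtrations match.

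For part (3), freeness of the $T_L$-action on $Z_h$ gives $H^*_{T_L\times T_R}(Z_h)\cong H^*_{T_R}(T_L\backslash Z_h)=H^*_T(X_h)$ (the classifying space of $T_L\times T_R$ acting freely on $Z_h$ reduces to $BT_R$ times the quotient). The symmetric identity yields $H^*_{T_L\times T_R}(Z_h)\cong H^*_T(Y_h)$, and composing these gives the required graded ring isomorphism. Part (2) is an immediate consequence: the (unlabeled) GKM-graph records fixed points and closures of one-dimensional orbits together with their incidences, and this combinatorial data is read off from the orbit space with its orbit type filtration, which is the same for $X_h$ and $Y_h$ by part (1). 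Part (4) follows by combining (3) with equivariant formality: both $X_h$ (by Theorem \ref{thmXhSmoothEvenCells} and Remark \ref{remOddIsZero}) and $Y_h$ (by \cite{Tym}) are equivariantly formal, so $H^*_T(-)$ is a free $H^*(BT)$-module with degree-wise rank equal to the Betti numbers. Matching the graded dimensions $\dim H^{2k}_T(X_h)=\dim H^{2k}_T(Y_h)$ degree by degree and inverting the triangular relation with $\dim H^{2j}(BT)$ forces $\dim H^{2i}(X_h)=\dim H^{2i}(Y_h)$.

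The main obstacle I anticipate is part (2), more precisely the verification that the combinatorial data of the GKM-graph really is invariant of the two presentations. The cleanest way is to observe that a one-dimensional orbit in $X_h$ corresponds to a codimension-one stratum in $T_L\backslash Z_h/T_R$ whose generic stabilizer is a codimension-one subtorus of $T_R$; the same stratum viewed from $Y_h$ has stabilizer the corresponding codimension-one subtorus of $T_L$. Thus the vertex-edge incidence structure and the vertex set (isolated orbits in the double quotient) coincide, yielding the same unlabeled GKM-graph. A secondary subtlety worth noting in part (3) is that while the ring isomorphism is canonical, the $H^*(BT)$-algebra structures on the two sides are transported via the automorphism of $H^*(BT)$ induced by the identification $T_L\cong T_R$, so the statement is correctly phrased at the level of graded rings rather than graded $H^*(BT)$-algebras.
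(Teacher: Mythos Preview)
Your proposal is correct and follows essentially the same route as the paper: both derive (1), (3), and (4) from the double fibration of $Z_h$, identifying the two orbit spaces with the double coset $T^n\backslash Z_h/T^n$ and the two equivariant cohomology rings with $H^*_{T\times T}(Z_h)$, and then invoking equivariant formality for (4). The only notable difference is in part (2): the paper simply writes down the GKM-graph of $X_h$ explicitly (vertices $\sigma\in\Sigma_n$, edges $\sigma\leftrightarrow\sigma(i,j)$ for $i<j\leqslant h(i)$) and cites \cite{AHM} for the identical description of the GKM-graph of $Y_h$, whereas you argue more intrinsically from the coincidence of the orbit-type filtrations on the common quotient; the paper also supplements the implicit argument for (3) with an explicit ring isomorphism $\twist$ that on each summand $H^*(BT)_\sigma$ permutes the generators $\epsilon_i\mapsto\epsilon_{\sigma(i)}$, which confirms your observation that the isomorphism is only of graded rings and not of $H^*(BT)$-algebras.
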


\begin{rem}\label{remXYEquivDifferent}
For the Hessenberg function $\hm\colon[n]\to[n]$, $\hm(i)=n$,
$\forall i\in[n]$, the space $X_{\hm}$ is the space of all
isospectral Hermitian matrices, and $Y_{\hm}$ is the whole flag
variety $\Fl_n$. Therefore in this case we have $X_h\cong Y_h$.

We should remark that in general the manifolds $X_h$ and $Y_h$ are
different. For a function $\hmin$ (see \eqref{eqPlusOneFunctionH})
the space $X_{\hmin}$ is the manifold of isospectral tridiagonal
Hermitian matrices. It is known \cite{Tomei,BFR,DJ}, that this
manifold is a quasitoric manifold over the permutohedron, and its
characteristic function is determined by a proper coloring of
facets of permutohedron. On the other hand, it is known
\cite{BFR}, that Hessenberg variety $Y_{\hmin}$ is the toric
variety corresponding to root system of type $A_n$. Therefore,
$Y_{\hmin}$ is also a quasitoric manifold over the permutohedron,
however its characteristic function is determined by normal
vectors to facets of permutohedron. The characteristic functions
are non-equivalent thus, at least from equivariant point of view,
the manifolds $X_{\hmin}$ and $Y_{\hmin}$ are different.
\end{rem}

\begin{ex}\label{exXYareDifferent}
In case $n=3$ the 4-manifolds $X_{\hmin}$ and $Y_{\hmin}$ can be
explicitly described. The manifold $Y_{\hmin}$ is a toric manifold
over the regular hexagon. Such hexagon can be obtained from a
triangle by cutting its vertices. Therefore $Y_{\hmin}$ is the
blow up of $\CP^2$ at 3 points, hence, up to diffeomorphism,
$Y_{\hmin}\cong \CP^2\hash3\overline{\CP^2}$.

The manifold $X_{\hmin}$ is quasitoric, and its characteristic
pair is shown on fig.\ref{pictConSum}. It can be seen that the
characteristic pair is the connected sum of two squares along the
vertex. Therefore, $X_h\cong (S^2\times S^2)\hash (S^2\times
S^2)$. The manifolds $X_{\hmin}$ and $Y_{\hmin}$ are
non-diffeomorphic: for example they have different signatures.
\end{ex}

\begin{figure}[h]
\begin{center}
\includegraphics[scale=0.3]{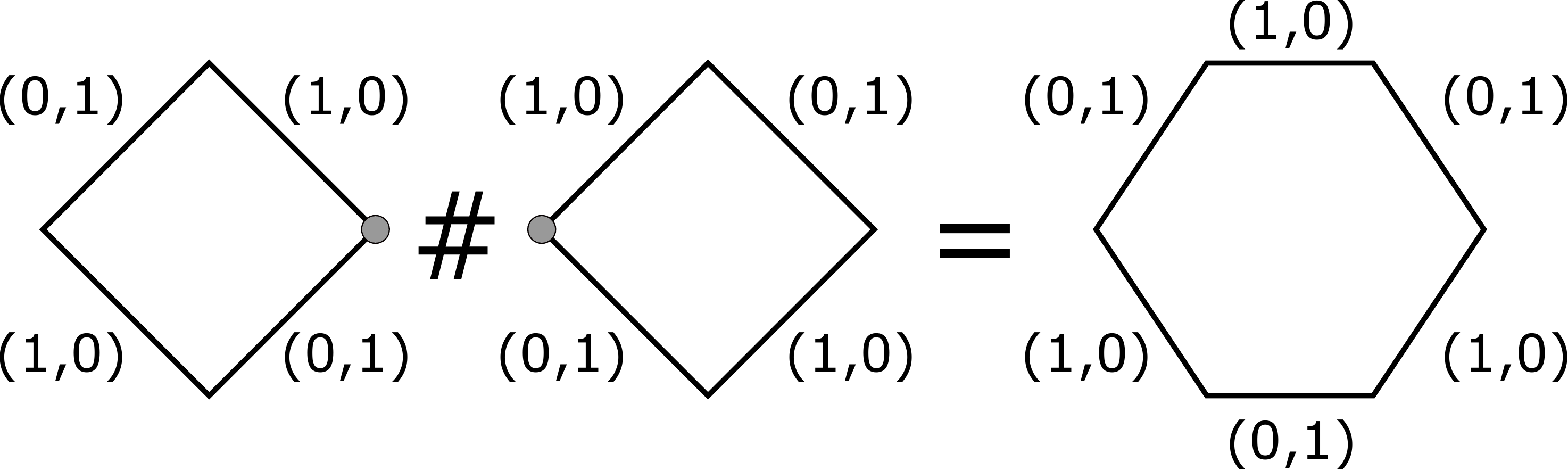}
\end{center}
\caption{Characteristic pair for the hexagon}\label{pictConSum}
\end{figure}


\section{Generalized Toda flow}\label{secStaircaseMatrAreManifolds}

To prove Theorem \ref{thmXhSmoothEvenCells} we use the properties
of isospectral flows. These properties are briefly described
below.

Let $L\in M_n$ be a Hermitian matrix, $L=L_++L_0+L_-$, where
$L_+,L_0,L_-$ are strictly upper triangular, diagonal, and
strictly lower triangular matrices respectively. So far
$L_+=\overline{L}_-^t$. Consider the skew Hermitian matrix
$P=P(L)=L_--L_+$. The generalized Toda flow is the dynamical
system
\begin{equation}\label{eqGenToda}
\dot{L}=[L,P(L)]=LP-PL.
\end{equation}
The following holds.

\begin{enumerate}
\item Since $L$ is Hermitian, and $P$ is skew Hermitian,
the commutator $[L,P]$ is Hermitian. Therefore the subspace $M_n$
of Hermitian matrices is preserved by the flow~\eqref{eqGenToda}.

\item The standard reasoning (see, e.g. \cite{Moser})
shows that flows of the form $\dot{L}=[L,A(L)]$ preserve the
spectrum of $L$. Therefore the subset $M_\lambda$ is preserved by
the flow~\eqref{eqGenToda}.

\item Finally, it can be checked by a direct computation that $L$
has a staircase form, determined by a Hessenberg function $h$,
i.e. $L\in M_h$, then $P(L)\in M_h$ and moreover $[L,P]\in M_h$.
Therefore the subset $M_h$ together with $X_h=M_h\cap M_\lambda$
are preserved by the flow \eqref{eqGenToda}.

\item The flow commutes with the action of torus $T^{n-1}$.
Indeed, the action on both $L$ and $P$ is the conjugation by
diagonal matrix $D$, therefore we have:
\[
[DLD^{-1},DPD^{-1}]=D[L,P]D^{-1}=D\dot{L}D^{-1}=(DLD^{-1})^\cdot
\]

\item The set of real symmetric matrices is invariant under the
flow \eqref{eqGenToda}. Natural analogs of the previous statements
hold in the real case.
\end{enumerate}

\begin{ex}
For the Hessenberg function $\hmin$ in the real case, the flow
\eqref{eqGenToda} is the ordinary non-periodic Toda flow on the
space of tridiagonal symmetric matrices. This is the classical
example of integrable dynamical system, see \cite{Moser}.
\end{ex}

\begin{prop}[{see.\cite[Thm.3.1]{Nan}}]\label{propAsymptDiagonal}
Let $L(t)$ be a trajectory of the flow \eqref{eqGenToda}. If
$t\to+\infty$ or $t\to-\infty$, the solution $L(t)$ tends to a
diagonal matrix.
\end{prop}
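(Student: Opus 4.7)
The plan is to produce a Lyapunov function for \eqref{eqGenToda} on the compact isospectral set $X_h$, apply the LaSalle invariance principle to conclude that the $\omega$-limit set of every trajectory consists of diagonal matrices, and then use connectedness of the $\omega$-limit set to upgrade this to genuine convergence. In the spirit of Moser's treatment of the tridiagonal Toda flow, the natural candidate is the linear function
\[
V(L)=\tr(NL),\qquad N=\diag(1,2,\ldots,n).
\]

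First I would compute $\dot V$ along a trajectory. Using $\dot L=[L,P]$ together with the identity $\tr(N[L,P])=\tr([N,L]P)$, the formula $[N,L]_{ij}=(i-j)L_{ij}$, and the definitions $P_{ji}=L_{ji}$ for $j>i$, $P_{ji}=-L_{ji}$ for $j<i$, $P_{jj}=0$, a direct calculation combined with the Hermiticity identity $L_{ij}L_{ji}=|L_{ij}|^2$ yields
\[
\dot V=-\sum_{i\neq j}|i-j|\,|L_{ij}|^2\leqslant 0,
\]
with equality precisely when $L$ is diagonal. Since $X_h\subseteq M_\lambda\cong\Fl_n$ is compact and preserved by the flow, $V(L(t))$ is monotone and bounded, so by the LaSalle invariance principle the $\omega$-limit set of any trajectory lies in the largest flow-invariant subset of $\{L\in X_h:\dot V(L)=0\}$. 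This is exactly the set $\Delta_\lambda$ of diagonal matrices in $X_h$ with spectrum $\lambda$: every such matrix is stationary because $P(L)=0$ for diagonal $L$, and $\Delta_\lambda$ has cardinality at most $n!$ (corresponding to permutations of $\lambda_1,\ldots,\lambda_n$ on the diagonal).

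Finally, the $\omega$-limit set of a continuous flow on a compact metric space is connected, so it must be a connected subset of the finite set $\Delta_\lambda$, hence a single point $A_\sigma$ for some $\sigma\in\Sigma_n$. The case $t\to-\infty$ is handled symmetrically by applying the same argument to the time-reversed trajectory and to the Lyapunov function $-V$; the reversed system $\dot L=[L,-P]$ has the same set of stationary points, so the conclusion is identical. The main obstacle I anticipate is purely bookkeeping — pinning down the signs in the Lyapunov calculation and confirming that the staircase constraint $L\in M_h$ does not interfere (it does not, because the flow preserves $M_h$ by item~(3) preceding the proposition and $V$ only reads the diagonal entries, which always belong to the staircase pattern). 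Once $\dot V\leqslant 0$ and its equality case are in hand, the LaSalle step, the connectedness of the $\omega$-limit set, and the finiteness of $\Delta_\lambda$ combine to give the claim with no further input.
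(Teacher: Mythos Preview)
Your argument is correct. The paper itself does not supply a proof of this proposition; it is quoted from Nanda \cite[Thm.~3.1]{Nan}. That said, the paper's very next lemma shows that the generalized Toda flow is the gradient flow of the \emph{same} function $F(L)=\Tr(NL)$ with respect to a suitable Riemannian metric on $\Fl_n$, which is a strengthening of your Lyapunov inequality $\dot V\leqslant 0$ and leads to the proposition by the same $\omega$-limit reasoning you give. So your approach is entirely in the spirit of the surrounding text.

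One minor comment: the proposition as stated is not restricted to $X_h$, so it is cleaner to run the LaSalle argument on the full isospectral manifold $M_\lambda\cong\Fl_n$, which is already compact and flow-invariant by items (1)--(2) preceding the proposition. Your computation of $\dot V$ and the equality case use nothing about the staircase shape, and the set of diagonal matrices in $M_\lambda$ is still finite (of cardinality $n!$), so the connectedness-of-$\omega$-limit step goes through unchanged. Invoking $X_h$ is harmless but unnecessary here.
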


Since the spectrum $\lambda=(\lambda_1<\cdots<\lambda_n)$ is
preserved by the flow, the limit matrix has the form
\begin{equation}\label{eqDiagonalMatr}
A_\sigma=\diag(\lambda_{\sigma(1)},\ldots,\lambda_{\sigma(n)})
\end{equation}
for some permutation $\sigma\in\Sigma_n$.

We will prove the smoothness of $X_h$ following the classical idea
used by Tomei in tridiagonal case \cite{Tomei}. Let the diagonal
elements of the matrix $L\in X_h$ be denoted $a_1,\ldots,a_n$, and
above-diagonal elements are $b_{i,j}$, where $i<j\leqslant h(i)$.

\begin{lem}[{see. \cite[Lm 2.1]{Tomei}}]
Let $A_\sigma\in M_h$ be a diagonal matrix with distinct
eigenvalues. Then the mapping $\psi\colon U\to
\Ro^n\times\Co^{d(h)}$, $L\mapsto (\{\lambda_i\},\{b_{i,j}\mid
i<j\leqslant h(j)\})$ is a local diffeomorphism on some
neighborhood $U\subset M_h$ of a matrix $A_\sigma$.
\end{lem}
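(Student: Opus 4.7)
The plan is to prove the lemma by applying the inverse function theorem to $\psi$ at the point $A_\sigma$. First I would check dimensions: the domain $M_h$ is a real vector space of dimension $n + 2d(h)$ (there are $n$ real diagonal parameters $a_1,\ldots,a_n$ and, for each pair $i<j$ with $j\leqslant h(i)$, one complex parameter $b_{i,j}$, contributing $2d(h)$ real dimensions). The codomain $\Ro^n\times\Co^{d(h)}$ has the same real dimension, so it suffices to show that $d\psi|_{A_\sigma}$ is an isomorphism.

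Write $\psi=(\psi_1,\psi_2)$, where $\psi_1(L)$ is the ordered eigenvalue tuple $(\mu_1<\cdots<\mu_n)$ and $\psi_2(L)$ is the tuple of above-diagonal entries $(b_{i,j})$. The component $\psi_2$ is the restriction of a linear projection, so its differential is the identity on the off-diagonal subspace and zero on the diagonal subspace. For $\psi_1$ I would use first-order perturbation theory at a matrix with simple spectrum: near $A_\sigma$, ordered eigenvalues are smooth functions of the matrix entries, and at $A_\sigma$ the eigenvector of $\lambda_{\sigma(i)}$ is the standard basis vector $e_i$. Hence for a Hermitian tangent vector $E\in T_{A_\sigma}M_h$ the derivative of the $k$-th eigenvalue is
\[
\langle e_{\sigma^{-1}(k)},\,E\,e_{\sigma^{-1}(k)}\rangle = E_{\sigma^{-1}(k),\,\sigma^{-1}(k)}.
\]
In particular this derivative depends only on the diagonal part of $E$ and vanishes on off-diagonal directions, while on the diagonal subspace it is given precisely by the permutation matrix associated to $\sigma^{-1}$.

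Combining the two calculations, $d\psi|_{A_\sigma}$ is block-diagonal with respect to the decomposition $T_{A_\sigma}M_h=\Ro^n_{\mathrm{diag}}\oplus\Co^{d(h)}_{\mathrm{off}}$: the first block is the permutation matrix of $\sigma^{-1}$, and the second block is the identity. Both blocks are invertible, so the differential is an isomorphism. The inverse function theorem then supplies a neighborhood $U\subset M_h$ of $A_\sigma$ on which $\psi$ is a diffeomorphism onto its image.

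The only non-formal ingredient is the smoothness (and first-order expansion) of ordered eigenvalues near a simple-spectrum matrix; this is classical Rellich--Kato perturbation theory, and I would invoke it rather than rederive it. I do not anticipate further obstacles: the real content of the lemma is just that, in an $M_h$-neighborhood of $A_\sigma$, the eigenvalues and above-diagonal entries form an independent local coordinate system, with the diagonal entries recovered as smooth functions of them.
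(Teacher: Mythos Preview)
Your argument is correct. The paper does not actually supply its own proof of this lemma: it simply cites Tomei's Lemma~2.1 and moves on to use the conclusion. Your inverse-function-theorem computation, with the block-diagonal Jacobian obtained from first-order eigenvalue perturbation at a simple-spectrum diagonal matrix, is exactly the standard argument (Tomei's proof in the real tridiagonal case proceeds the same way), so there is nothing to compare beyond noting that you have written out what the paper leaves as a reference.
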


Therefore, the map $M_h\to \Ro^n$, $L\mapsto
(\lambda_1,\ldots,\lambda_n)$ is a smooth submersion in
neighborhoods of diagonal matrices. Now we prove that the spaces
$X_h$ are smooth manifolds, and their smooth types are independent
of simple spectrum $\lambda$, which is the first part of Theorem
\ref{thmXhSmoothEvenCells}.

\begin{proof} Let $L\in M_h$ be an arbitrary staircase Hermitian matrix with
simple spectrum. Consider the trajectory of generalized Toda flow
having initial value $L(0)=L$. According to proposition
\ref{propAsymptDiagonal}, for some $t$ we have $L(t)\in U$, where
$U$ is a neighborhood of diagonal matrix. The flow determines a
diffeomorphism $\phi_t$ from a neighborhood $V\subset M_h$ of
matrix $L$ to the neighborhood $U'\subset U$ of matrix $L(t)$. The
composed map
\[
\psi\circ\phi_t\colon V\to \Ro^n,
\]
sends every matrix to its spectrum according to isospectral
property of the flow. Since this map is the composition of a
diffeomorphism and a submersion, it is a submersion. Therefore the
map
\[
p\colon M_h\setminus \Sigma\to
C=\{(\lambda_1,\ldots,\lambda_n)\in\Ro^{n}\mid
\lambda_1<\cdots<\lambda_n\}, \qquad L\mapsto \mbox{ spectrum of
}L,
\]
is a smooth submersion at all points, where $\Sigma$ is the set of
matrices with multiple eigenvalues. Thus $p$ is a smooth
fibration, and all its fibers $p^{-1}(\lambda)=M_h\cap
M_\lambda=X_h$ are diffeomorphic.
\end{proof}

The generalized Toda flow determines the dynamical system on a
manifold $X_h$. According to Proposition \ref{propAsymptDiagonal}
the equilibria points of this system are the diagonal matrices
$A_\sigma$, moreover, every trajectory tends to one of these
matrices if $t\to \pm\infty$. Therefore, the system has no closed
periodical orbits except equilibria points. To prove the vanishing
of odd cohomology we use Morse theory.

\begin{lem}
Let $\hm(i)=n$, and consequently $X_{\hm}$ is diffeomorphic to the
complete complex flag variety $\Fl_n$. The generalized Toda flow
is a gradient flow on $X_{\hm}$.
\end{lem}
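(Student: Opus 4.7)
The plan is to exhibit a Morse function $f$ and a Riemannian metric $g$ on $X_{\hm}=M_\lambda$ with respect to which the generalized Toda vector field $L\mapsto[L,P(L)]$ coincides with $\nabla^g f$. I take $f(L)=\tr(LN)$ for a fixed diagonal matrix $N=\diag(c_1,\ldots,c_n)$ with $c_1<\cdots<c_n$. Using the description $T_L M_\lambda=\{[L,X]\mid X\in\uu(n)\}$ of tangent spaces on the adjoint orbit, a short computation gives $df_L([L,X])=\tr([L,X]N)=-\tr(X[L,N])$. This vanishes for all $X\in\uu(n)$ iff $[L,N]=0$, and since $N$ has distinct eigenvalues this forces $L$ to be diagonal. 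Hence the critical set of $f$ is precisely the set of Toda equilibria $\{A_\sigma\mid\sigma\in\Sigma_n\}$ produced by Proposition~\ref{propAsymptDiagonal}.

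For the metric, the key algebraic observation is that $P(L)=L_-\!-L_+$ is exactly the $\uu(n)$-component of the Hermitian matrix $L\in\mathfrak{gl}_n(\Co)$ in the Iwasawa decomposition $\mathfrak{gl}_n(\Co)=\uu(n)\oplus\mathfrak{b}$, where $\mathfrak{b}$ is the Borel subalgebra of upper triangular matrices with real diagonal; the complementary component is $L-P(L)=L_0+2L_+\in\mathfrak{b}$. I would equip $M_\lambda\cong U(n)/T^n$ with the $T^n$-invariant Riemannian metric obtained by declaring, at each $L$, that the Iwasawa-adapted identification of $T_L M_\lambda$ with a specific complement of the stabilizer $\uu(n)_L$ inside $\uu(n)$ is an isometry for the trace pairing. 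With this metric, a root-space computation in $\uu(n)/\ttt$ converts the covector $[L,N]$ naturally produced by $df$ into the tangent vector $[L,P(L)]$, establishing the gradient identity $\nabla^g f(L)=[L,P(L)]$ pointwise.

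The main obstacle is the explicit construction of this Iwasawa-adapted metric and the verification that the gradient identity holds globally rather than merely at a reference point. Smoothness of $g$ reduces to smoothness of the spectral decomposition on the simple-spectrum locus (where $M_\lambda$ lives), and $T^n$-invariance follows from the evident equivariance of $L\mapsto P(L)$ under conjugation by diagonal unitaries. Once $g$ is in place, the identity $\nabla^g f=[L,P(L)]$ becomes a direct linear-algebra check in the eigenbasis of $L$, using the formulas $[L,N]_{ij}=(c_j-c_i)L_{ij}$ and $P(L)_{ij}=-\sgn(j-i)L_{ij}$ (with a matching overall sign absorbed into $f\mapsto-f$). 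Combining this gradient structure with Proposition~\ref{propAsymptDiagonal} supplies the Morse-theoretic input needed for the subsequent vanishing of odd cohomology of $X_{\hm}$.
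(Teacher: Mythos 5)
You have the right potential function and the right target identity, but the lemma's actual content --- the construction of the Riemannian metric --- is exactly the step you defer as ``the main obstacle,'' and the metric you do sketch, read literally, gives the wrong vector field. If you identify $T_LM_\lambda$ with the orthogonal complement of the stabilizer $\uu(n)_L$ in $\uu(n)$ via $X\mapsto[L,X]$ and declare that identification an isometry for the trace pairing, you get the normal metric on the adjoint orbit; then $\Tr([L,X]N)=-\Tr(X[L,N])$ together with $[L,N]\perp\uu(n)_L$ shows that the gradient of $\Tr(LN)$ is the double-bracket field $[L,[L,N]]$, \emph{not} the Toda field $[L,P(L)]$. These coincide on tridiagonal matrices but not on all of $X_{\hm}\cong\Fl_n$, so an entrywise rescaling of the form is unavoidable, and your observation that $P(L)$ is the $\uu(n)$-component of $L$ in the Iwasawa decomposition, while correct, does not by itself single out such a rescaling.

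The paper supplies precisely the rescaling that your own formulas dictate. Take $N=\diag(1,\ldots,n)$, let $J\colon\uu(n)\to\uu(n)$ multiply the $(i,j)$ entry by $|i-j|$, and set $\langle\Omega_1,\Omega_2\rangle_J=-\Tr(\Omega_1J(\Omega_2))$; this form has kernel $\ttt(n)$, hence is positive definite on $\uu(n)/\ttt(n)$, and is spread over $U(n)/T^n$ by the transitive $U(n)$-action. Since $[L,N]_{ij}=(j-i)L_{ij}$ and $P(L)_{ij}=-\sgn(j-i)L_{ij}$, one gets $-J^{-1}([L,N])=P(L)$, which is exactly your ``conversion of the covector $[L,N]$ into the tangent vector $[L,P(L)]$''; the first-order expansion of $F((1+\Omega)U_0)$ then establishes the gradient identity at every point at once, so no separate globalization step is needed. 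Two smaller corrections: the formulas $[L,N]_{ij}=(c_j-c_i)L_{ij}$ and $P(L)_{ij}=-\sgn(j-i)L_{ij}$ hold in the standard coordinate basis, where $N$ is diagonal, not in the eigenbasis of $L$, so the proposed ``check in the eigenbasis of $L$'' is misplaced; and if you keep a general $N=\diag(c_1,\ldots,c_n)$ then $J$ must multiply the $(i,j)$ entry by $|c_i-c_j|$ rather than $|i-j|$. With the metric written down explicitly, your critical-point analysis and the appeal to Proposition \ref{propAsymptDiagonal} go through as stated.
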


\begin{proof}
The proof repeats the idea of \cite{ChShS}, where it was developed
in the real case. At first note that Toda flow can be written in a
simple way on the level of unitary matrices. The solution $L(t)$
of the flow \eqref{eqGenToda} has the form $L(t)=U(t)\Lambda
U(t)^{-1}$, where $U(t)$ is the solution of the equation
\[
\dot{U}=-P(L)U,\qquad P(L)=P(U\Lambda U^{-1})=(U\Lambda
U^{-1})_--(U\Lambda U^{-1})_+.
\]
To prove the lemma, we need to specify the Riemannian metric and
the function on $U(n)/T^n\cong X_{\hm}$ such that the generalized
Toda flow is the gradient of this function. We determine the
function $F$ on the manifold $U(n)/T^n$ by setting
$F(U)=a_1+2a_2+\cdots+na_n$, where $(a_1,\ldots,a_n)$ is the
diagonal of the Hermitian matrix $L=U\Lambda U^{-1}\in X_{\hm}$.
If we denote the diagonal matrix $\diag(1,2,\ldots,n)$ by $N$, we
have $F(U)=\Tr(LN)=\Tr(U\Lambda U^{-1}N)$.

Now we define a Riemannian metric on $U(n)/T^n$. At first we
define a bilinear form on the tangent space $\uu(n)$ of the group
$U(n)$. The tangent algebra $\uu(n)$ consists of skew Hermitian
matrices of size $n$. Let $J$ be a linear operator on $\uu(n)$,
which multiplies each element $\omega_{ij}$ of skew Hermitian
matrix $\Omega\in\uu(n)$ by $|i-j|$. Hence, the $k$-th lower and
upper diagonals of a matrix are multiplied by $k$, in particular
its main diagonal vanishes. Strictly speaking, $J$ does not have
an inverse map. Nevertheless, it will be convenient to define
$J^{-1}$ as the operator acting on the subspace of all skew
Hermitian matrices with zeroes at main diagonal. The operator
$J^{-1}$ divides the elements of $k$-th diagonal by $k$. For each
matrix $\Omega\in \uu(n)$ having zeroes at main diagonal we have
$\Omega=JJ^{-1}(\Omega)$.

There is a standard Killing form on the space $\uu(n)$
\[
\langle \Omega_1,\Omega_2\rangle=-\Tr(\Omega_1\Omega_2).
\]
We consider another bilinear form by setting
\[
\langle \Omega_1,\Omega_2\rangle_J=\langle
\Omega_1,J(\Omega_2)\rangle=-\Tr(\Omega_1J(\Omega_2))
\]
The form $\langle\cdot,\cdot\rangle_J$ is well defined on
$\uu(n)$, however it is degenerate. Its kernel consists of
diagonal matrices, since such matrices are annihilated by $J$.
Therefore, the form $\langle\cdot,\cdot\rangle_J$ is a
nondegenerate positive-definite form on $\uu(n)/\ttt(n)$, that is
on the tangent space of $U(n)/T^n$. We spread the form
$\langle\cdot,\cdot\rangle_J$ over $U(n)/T^n$ by left transitive
action of $U(n)$. This defines the Riemannian metric on
$U(n)/T^n\cong X_{\hm}$.

Now we show that the gradient flow of the function $F$ with
respect to metric $\langle\cdot,\cdot\rangle_J$ coincides with the
generalized Toda flow. Let $U_0\in U(n)$ and $L_0=U_0\Lambda
U_0^{-1}$. Consider the infinitesimal shift $U=(1+\Omega)U_0$,
$\Omega\in \uu(n)$. We have
\begin{multline}
F(U)=\Tr((1+\Omega)U_0\Lambda
U_0^{-1}(1-\Omega)N)=\\=F(U_0)+\Tr(\Omega U_0\Lambda
U_0^{-1}N)-\Tr(U_0\Lambda U_0^{-1}\Omega N)+o(\Omega)=\\=
F(U_0)+\Tr(\Omega[U_0\Lambda
U_0^{-1},N])+o(\Omega)=\\=F(U_0)+\Tr(\Omega\cdot
J(J^{-1}([L_0,N])))+o(\Omega)=\\=F(U_0)-\langle\Omega,J^{-1}([L_0,N])\rangle_J+o(\Omega).
\end{multline}
The transition from third to forth line is correct since $[L_0,N]$
is a matrix with zeroes at diagonal. A direct check shows that
$-J^{-1}([L_0,N])=P(L_0)=(L_0)_--(L_0)_+$. Therefore,
$\grad_{\langle\cdot,\cdot\rangle_J} F$ at point $[U]\in U(n)/T^n$
coincides with $-P(U\Lambda U^{-1})$, which means that it
determines the generalized Toda flow.
\end{proof}

The equilibria points $A_\sigma$ of the Toda flow are singular
points of the function $F$. Let
\[
W^s_X(A)=\{L=L(0)\in X\mid L(t)\to A \mbox{ if }t\to+\infty\};
\]
\[
W^u_X(A)=\{L=L(0)\in X\mid L(t)\to A \mbox{ if }t\to-\infty\}
\]
denote the stable and unstable manifolds of the equilibrium point
$A$ of the flow $\dot{L}=f(L)$ on a manifold $X$. For the
generalized Toda flow \eqref{eqGenToda} on the manifold $\Fl\cong
X_{\hm}$ and its submanifolds $X_h$ we have
\[
W^s_{X_h}(A_\sigma)=W^s_{\Fl}(A_\sigma)\cap X_h,
\]
for each equilibrium point $A_\sigma$ of the form
\eqref{eqDiagonalMatr}.

\begin{lem}\label{lemEquilibriumPoints}
All equilibria points $A_\sigma$ of the flow \eqref{eqGenToda} on
$X_h$ have hyperbolic type. The dimension of the stable manifold
at a point $A_\sigma$ is equal to
\[
\dim A_\sigma=2\sharp\{1\leqslant i< j\leqslant h(i)\mid
\sigma(i)>\sigma(j)\}
\]
\end{lem}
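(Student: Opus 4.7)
The plan is to linearise the generalized Toda flow \eqref{eqGenToda} at the equilibrium $A_\sigma \in X_h$ and read off the eigenvalues of the linearisation in the natural coordinates $b_{ij}$ supplied by the lemma preceding the statement. Since $A_\sigma$ is diagonal we have $P(A_\sigma)=0$, so writing $L = A_\sigma + L_1$ with $L_1$ a small Hermitian perturbation in $M_h$, expansion of \eqref{eqGenToda} gives
\[
\dot{L}_1 = [A_\sigma + L_1, P(L_1)] = [A_\sigma, P(L_1)] + O(|L_1|^2).
\]
Because $X_h$ is preserved by the flow, this linear operator restricts to $T_{A_\sigma} X_h$; by the previous lemma the latter is parameterised by the complex off-diagonal entries $b_{ij}$ with $1 \leqslant i < j \leqslant h(i)$, so only these coordinates need to be tracked.

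The computation is then direct. For $i < j$ one has $P(L_1)_{ij} = -(L_1)_{ij} = -b_{ij}$, and therefore
\[
[A_\sigma, P(L_1)]_{ij} = (\lambda_{\sigma(i)} - \lambda_{\sigma(j)})(-b_{ij}) = (\lambda_{\sigma(j)} - \lambda_{\sigma(i)})\, b_{ij}.
\]
Hence the linearised flow decouples completely into $d(h)$ independent complex scalar ODEs
\[
\dot{b}_{ij} = (\lambda_{\sigma(j)} - \lambda_{\sigma(i)})\, b_{ij}, \qquad 1 \leqslant i < j \leqslant h(i),
\]
each of which contributes the real eigenvalue $\lambda_{\sigma(j)} - \lambda_{\sigma(i)}$ with multiplicity two (its real and imaginary parts).

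Both claims of the lemma now follow at once. Since $\sigma$ is a permutation and the $\lambda_k$ are pairwise distinct, every coefficient $\lambda_{\sigma(j)} - \lambda_{\sigma(i)}$ is nonzero, so $A_\sigma$ is a hyperbolic equilibrium. The stable subspace is spanned by the coordinates with negative eigenvalue, i.e.\ those pairs with $\sigma(i) > \sigma(j)$, and the stable manifold theorem gives
\[
\dim W^s_{X_h}(A_\sigma) = 2\,\sharp\{1 \leqslant i < j \leqslant h(i) \mid \sigma(i) > \sigma(j)\}.
\]
The one point that needs care is the first step: that the linearisation of an ambient flow at a fixed point on an invariant submanifold genuinely restricts to the tangent space of the submanifold. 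Here this is transparent because property~(3) of the Toda flow listed in Section~\ref{secStaircaseMatrAreManifolds} shows that $[A_\sigma, P(L_1)] \in M_h$ whenever $L_1 \in M_h$, while the isospectral submersion established in the previous lemma forces the diagonal perturbations of $L_1$ to be higher-order functions of the off-diagonal $b_{ij}$'s, so that $\{b_{ij}\}_{i<j\leqslant h(i)}$ is indeed an honest coordinate system on $T_{A_\sigma} X_h$.
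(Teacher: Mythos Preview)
Your proof is correct and follows essentially the same approach as the paper: linearise the Toda flow at $A_\sigma$, observe that the linearised system decouples into the scalar equations $\dot{b}_{ij}=(\lambda_{\sigma(j)}-\lambda_{\sigma(i)})b_{ij}$, and read off hyperbolicity and the stable-manifold dimension from the signs. The only cosmetic difference is that the paper computes the full coordinate expression for $[L,P]_{ij}$ (including the quadratic terms) before discarding higher-order pieces, whereas you isolate the linear part more efficiently via $\dot{L}_1=[A_\sigma,P(L_1)]+O(|L_1|^2)$; your added remark about why the linearisation restricts to $T_{A_\sigma}X_h$ is a welcome clarification that the paper leaves implicit.
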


\begin{proof}
We will need a coordinate form of the flow $\dot{L}=[L,P]$. As
before, let $a_i$, $i\in [n]$ be the diagonal entries of $L\in
M_h$, and $b_{i,j}$ be its above diagonal entries. The staircase
condition implies a restriction $b_{i,j}=0$ if $j>h(i)$. We have
$b_{i,j}=\overline{b_{j,i}}$, $P_{i,j}=-b_{i,j}$ if $i<j$,
$P_{i,j}=b_{i,j}$ if $i>j$, $P_{i,i}=0$. Let us compute an
off-diagonal element of the matrix $[L,P]$. For $i<j$ we have
\[
(LP)_{ij} =
-\sum_{k<i}b_{ik}b_{kj}-a_{i}b_{ij}-\sum_{i<k<j}b_{ik}b_{kj}+
\sum_{k>j}b_{ik}b_{kj}
=-a_ib_{ij}-\sum_{k<j}\overline{b_{ki}}b_{kj}-\sum_{i<k<j}b_{ik}b_{kj}+
\sum_{k>j}b_{ik}\overline{b_{jk}};
\]
\[
(PL)_{ij}=\sum_{k<i}b_{ik}b_{kj}-\sum_{i<k<j}b_{ik}b_{kj}-b_{ij}a_j-
\sum_{k>j}b_{ik}b_{kj}=-b_{ij}a_j+\sum_{k<j}\overline{b_{ki}}b_{kj}-\sum_{i<k<j}b_{ik}b_{kj}-
\sum_{k>j}b_{ik}\overline{b_{jk}}.
\]
\[
[L,P]_{ij}=b_{ij}(a_j-a_i)+2\sum_{j<k\leqslant
h(i)}b_{ik}\overline{b_{jk}}-2\sum_{g(j)\leqslant
k<i}\overline{b_{ki}}b_{kj}.
\]
Hence
\begin{equation}\label{eqTodaCoordin}
\dot{b}_{ij}=b_{ij}(a_j-a_i)+2\sum_{j<k\leqslant
h(i)}b_{ik}\overline{b_{jk}}-2\sum_{g(j)\leqslant
k<i}\overline{b_{ki}}b_{kj}
\end{equation}

The equilibrium point $A_\sigma$ corresponds to
$a_i=\lambda_{\sigma(i)}$, $b_{ij}=0$. To linearize the system
\eqref{eqTodaCoordin} in the neighborhood of $A_\sigma$ we assume
all $b_{ij}$ small and discard all terms of order $>1$ at the
right hand side of \eqref{eqTodaCoordin}. The linearized system
has the form
\begin{equation}\label{eqLinearization}
\dot{b}_{ij}=b_{ij}(\lambda_{\sigma(j)}-\lambda_{\sigma(i)}),\qquad
i<j\leqslant h(i).
\end{equation}
It can be seen that $2d(h)\times 2d(h)$-matrix of the linearized
system is diagonal (the doubling takes place since each number
$b_{ij}$ has real and imaginary component). Eigenvalues of the
linearized system have the form
$\lambda_{\sigma(j)}-\lambda_{\sigma(i)}$. All of them are either
positive or negative. Therefore the equilibrium point $A_\sigma$
has hyperbolic type. The dimension of the stable manifold equals
the number of negative eigenvalues of linearized system, i.e.
\[
\dim W^s_{X_h}(A_\sigma)=2\sharp\{i<j\leqslant h(i)\mid
\lambda_{\sigma(j)}-\lambda_{\sigma(i)}<0\}= 2\sharp\{i<j\leqslant
h(i)\mid \sigma(j)<\sigma(i)\}.
\]
The last identity holds since the order of real numbers
$\lambda_i$ coincides with the order of their indices.
\end{proof}

\begin{cor}\label{corMorseEven}
The function $F$ restricted to $X_h\subset X_{\hm}$ is a Morse
function. All its singular points have even indices. The Morse
complex is concentrated in even degrees thus its differential
vanishes. Odd homology and cohomology of $X_h$ vanish.
\end{cor}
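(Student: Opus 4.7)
The plan is to transfer the hyperbolicity analysis of Lemma \ref{lemEquilibriumPoints} into a Morse-theoretic statement about $F|_{X_h}$ and then invoke the standard principle that a Morse function all of whose indices have the same parity forces the odd (co)homology to vanish.

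First I would observe that the generalized Toda flow preserves $X_h\subset X_{\hm}$, and that on $X_{\hm}\cong U(n)/T^n$ this flow coincides with the gradient flow of $F$ with respect to the metric $\langle\cdot,\cdot\rangle_J$ constructed in the preceding lemma. Consequently $\grad F$ is everywhere tangent to $X_h$. I would then invoke the standard Riemannian fact that if the gradient of a function on an ambient manifold is tangent to a submanifold, then its restriction equals the gradient of the restricted function taken with respect to the induced metric. This identifies the Toda flow on $X_h$ with the gradient flow of $F|_{X_h}$ on $X_h$ with the induced metric.

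Next I would read off the Morse data from Lemma \ref{lemEquilibriumPoints}. The critical points of $F|_{X_h}$ are the equilibria of this gradient flow on $X_h$, and by Proposition \ref{propAsymptDiagonal} these are exactly the diagonal matrices $A_\sigma\in X_h$. Hyperbolicity at each $A_\sigma$ means that the Hessian of $F|_{X_h}$ at $A_\sigma$ is nondegenerate, so $F|_{X_h}$ is indeed a Morse function; moreover, the Morse index at $A_\sigma$ equals the real dimension of the stable manifold $W^s_{X_h}(A_\sigma)$, which by the lemma equals the even number $2\sharp\{1\leqslant i<j\leqslant h(i)\mid\sigma(i)>\sigma(j)\}$.

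Finally, I would use that $X_h$ is compact (as a closed subset of $M_\lambda\cong\Fl_n$), so the Morse function $F|_{X_h}$ endows $X_h$ with a CW-structure having one cell of dimension equal to the Morse index for each critical point. All cells are even-dimensional, hence the cellular chain complex is concentrated in even degrees and its differential vanishes for trivial reasons. This gives $H_{2k+1}(X_h;\Zo)=0$ for every $k$; the integral homology is thus free in even degrees and zero in odd degrees, so the universal coefficient theorem yields $H^{2k+1}(X_h;\Zo)=0$ as well. The main obstacle is really only the first step: one must be confident that tangency of $\grad F$ to $X_h$ legitimately identifies the Morse indices of $F|_{X_h}$ (with the induced metric) with the stable-manifold dimensions computed in Lemma \ref{lemEquilibriumPoints} for the ambient flow, and that sign conventions for stable versus unstable manifolds are consistent; the parity conclusion is insensitive to these sign choices since both $\dim W^s$ and $\dim W^u$ are even.
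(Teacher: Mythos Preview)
Your proposal is correct and follows exactly the argument the paper intends: the paper states this corollary without a separate proof, regarding it as an immediate consequence of the two preceding lemmas (that the Toda flow on $X_{\hm}$ is the gradient flow of $F$, and that the equilibria on $X_h$ are hyperbolic with even-dimensional stable manifolds). Your expansion of the implicit steps---tangency of $\grad F$ to $X_h$, identification of Morse indices with stable-manifold dimensions, and the parity argument---fills in precisely the details the paper leaves to the reader, and your closing remark that the conclusion is insensitive to the stable/unstable sign convention is a useful safeguard.
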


The corollary \ref{corMorseEven} completes the proof of Theorem
\ref{thmXhSmoothEvenCells}.

\begin{rem}
It was shown in \cite{ChShS} that in the case of real symmetric
isospectral matrices $M_\lambda^\Ro\cong\Fl_n^\Ro$, the
decomposition into stable (resp. unstable) manifolds coincides
with decomposition of real flag manifold into Bruhat cells (resp.
dual Bruhat cells). Since Bruhat and dual Bruhat cells are
transversal, the generalized Toda flow is a Morse--Smale system on
$M_\lambda^\Ro$. All the ideas of the work \cite{ChShS} work in
the complex case as well.
\end{rem}

\section{Torus action}\label{secStaircaseMatrGKM}

In this section we study the action of a compact torus on $X_h$ in
detail. First recall the definition of a GKM-manifold and
surrounding theory. Let $T=T^k$ be a compact torus. The character
lattice $N=\Hom(T,S^1)\cong \Zo^k$ is naturally identified with
the group $H^2(BT;\Zo)$. For a character $\alpha\in\Hom(T,S^1)$
let $V(\alpha)$ denote the corresponding representation on
$\Co^1$:
\[
V(\alpha)\colon T\to \Hom(\Co,\Co),\qquad t\cdot z=\alpha(t)z.
\]

\begin{defin}[Goresky--Kottwitz--MacPherson]
A compact orientable manifold $X$, $\dim X=2m$ with a smooth
action of a compact torus $T=T^k$ is called \emph{GKM-manifold} if
it satisfies the following conditions:
\begin{enumerate}
\item $X$ is equivariantly formal;
\item The fixed point set $X^T$ of the action is finite;
\item The weights of the tangent representation of a torus at each
fixed point $x\in X^T$ are pairwise non-collinear:
\[
T_xX=\oplus_{i=1}^mV(\alpha_i)\quad\Rightarrow\quad
\alpha_i,\alpha_j\in \Zo^k \mbox{ non-collinear if }i\neq j
\]
\item Each two-dimensional submanifold of $X$, which is preserved
by $T$ and consists of no more than one-dimensional orbits,
contains a fixed point.
\end{enumerate}
\end{defin}

Conditions 2-4 in the definition guarantee that equivariant
1-skeleton of a GKM-manifold $X$ (i.e. the set of no more than
one-dimensional torus orbits) consists of a set of fixed points,
with some pairs of fixed points connected by $T$-invariant
2-spheres. The action of $T$ on each such sphere is given by a
character $\alpha\in N$ determined up to sign:
\[
T^k\circlearrowright S^2=\CP^1,\qquad
t[z_0;z_1]=[z_0;\alpha(t)z_1]
\]
Therefore one can associate a GKM-graph $G(X)=(V,E,\alpha)$ with a
GKM-manifold $X$. The vertex set $V$ of $G(X)$ is the set of fixed
points of the action; two vertices $p,q\in V$ determine an edge
$\{p,q\}\in E$, if there is an invariant 2-sphere in $X$ between
$p$ and $q$. Moreover, each edge $\{p,q\}\in E$ of a GKM graph
$G(X)$ has a label $\alpha_{pq}\in N\cong\Zo^k$, which encodes the
character of the $T$-action on the corresponding sphere.

There is an abstract definition of a GKM-graph, which axiomatizes
the properties of the graphs $G(X)$, however we will not need this
definition. Details can be found in \cite{Kur} or~\cite{GKM}. The
equivariant and ordinary cohomology rings of a GKM-manifold can be
extracted from its GKM-graph according to the following result.

\begin{thm}[Goresky--Kottwitz--MacPherson]
Let $X$ be a GKM-manifold and $G(X)=(V,E,\alpha)$ its GKM-graph.
There is an isomorphism of $H^*(BT;\Zo)$-algebras:
\[
H_T^*(X;\Zo)\cong \{\phi\colon V\to H^*(BT;\Zo)\mid
\phi(p)\equiv\phi(q)\mod (\alpha_{pq})\forall pq\in E\},
\]
where the character $\alpha_{pq}$ is considered as an element of
$H^2(BT;\Zo)$. For the cohomology ring we have
\[
H^*(X;\Zo)\cong H^*_T(X;\Zo)\otimes_{H^*(BT;\Zo)}\Zo,
\]
according to equivariant formality of $X$.
\end{thm}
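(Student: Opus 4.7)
The plan is to combine the Borel localization theorem with the Chang--Skjelbred lemma, which together reduce the computation of $H^*_T(X;\Zo)$ to the equivariant cohomology of the one-skeleton $X_1\subset X$ (the set of points whose $T$-orbit is at most one-dimensional). By condition (2) in the definition of a GKM-manifold, $X^T=V$ is finite, so $H^*_T(X^T;\Zo)\cong\bigoplus_{p\in V}H^*(BT;\Zo)$, and equivariant formality (condition (1)) implies, via the standard Serre spectral sequence argument, that $H^*_T(X;\Zo)$ is a free module over $H^*(BT;\Zo)$. The first step is to invoke Borel localization to conclude that the restriction homomorphism
\[
\iota^*\colon H^*_T(X;\Zo)\hookrightarrow H^*_T(X^T;\Zo)\cong\bigoplus_{p\in V}H^*(BT;\Zo)
\]
is injective; hence elements of $H^*_T(X;\Zo)$ may be identified with tuples $\phi\colon V\to H^*(BT;\Zo)$.

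Next I would apply the Chang--Skjelbred lemma: under equivariant formality, the image of $\iota^*$ coincides with the image of the restriction $H^*_T(X_1;\Zo)\to H^*_T(X^T;\Zo)$. So the task reduces to computing the image of the latter map. By condition (4), $X_1$ decomposes, up to the fixed points themselves, into a disjoint union of $T$-invariant $2$-spheres, one for each edge $\{p,q\}\in E$, with the $T$-action on the sphere $S^2_{pq}$ given by the character $\alpha_{pq}$ (well-defined up to sign by condition (3), which also ensures these spheres meet only at fixed points). A direct Mayer--Vietoris or fibration computation for $S^2_{pq}=\CP^1$ with the action $t[z_0:z_1]=[z_0:\alpha_{pq}(t)z_1]$ gives
\[
H^*_T(S^2_{pq};\Zo)\cong\bigl\{(f,g)\in H^*(BT;\Zo)^{\oplus 2}\mid f\equiv g\pmod{\alpha_{pq}}\bigr\}.
\]
Assembling these local pieces, the image of $H^*_T(X_1;\Zo)\to\bigoplus_{p\in V}H^*(BT;\Zo)$ is exactly the subring of tuples $\phi$ satisfying $\phi(p)\equiv\phi(q)\pmod{\alpha_{pq}}$ for every edge $pq\in E$. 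The compatibility of $\iota^*$ with the $H^*(BT;\Zo)$-module structure and cup products (since restriction to a closed invariant subspace is a ring homomorphism) shows the identification is an isomorphism of $H^*(BT;\Zo)$-algebras, proving the first part.

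The second assertion is a direct consequence of equivariant formality. Since the Serre spectral sequence of $X_T\to BT$ collapses at $E_2$, the $H^*(BT;\Zo)$-module $H^*_T(X;\Zo)$ is free and isomorphic to $H^*(BT;\Zo)\otimes H^*(X;\Zo)$, so tensoring over $H^*(BT;\Zo)$ with $\Zo$ (killing the positive-degree generators $v_1,\ldots,v_k$) yields the fiber cohomology $H^*(X;\Zo)$, which is the stated isomorphism. The main obstacle in the above plan is establishing the Chang--Skjelbred step: one needs to show that the cokernel of $H^*_T(X_1)\to H^*_T(X^T)$ contains the image of no element of $H^*_T(X)$, which rests on analyzing the equivariant cohomology of the codimension-one orbit strata and using that orbits of codimension two in $X$ have trivial contribution under equivariant formality. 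This is where the finiteness of $X^T$, the pairwise non-collinearity of weights at each fixed point, and condition (4) are used in concert.
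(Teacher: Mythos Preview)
The paper does not supply its own proof of this theorem: it is stated as a background result attributed to Goresky--Kottwitz--MacPherson, with references to \cite{GKM} and \cite{Kur}, and is then applied as a black box to compute $H^*_T(X_h)$. So there is no ``paper's proof'' to compare against.

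Your outline is essentially the standard argument one finds in those references: equivariant formality gives freeness over $H^*(BT)$, Borel localization gives injectivity of restriction to fixed points, the Chang--Skjelbred lemma identifies the image with the image of $H^*_T(X_1)$, and the GKM hypotheses let you write $X_1$ as a wedge-like union of invariant $2$-spheres whose equivariant cohomology imposes exactly the congruence conditions $\phi(p)\equiv\phi(q)\pmod{\alpha_{pq}}$. That is the right architecture.

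One genuine caveat you should be aware of: both Borel localization and the Chang--Skjelbred lemma are usually stated and proved with rational (or field) coefficients. Over $\Zo$ the argument requires extra care---one typically needs the tangent weights at each fixed point to be primitive and pairwise linearly independent in a stronger sense (e.g.\ any two span a saturated rank-two sublattice), so that the relevant torsion phenomena do not obstruct the exact sequence underlying Chang--Skjelbred. The paper asserts the result over $\Zo$ without comment, and in the application to $X_h$ the weights $\epsilon_i-\epsilon_j$ do satisfy the needed integrality conditions, but your write-up should flag that the passage from $\Qo$ to $\Zo$ is not automatic and indicate what additional hypothesis or argument handles it.
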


This theorem gives a description of equivariant cohomology as a
submodule inside the free module
\[
\{\phi\colon V\to H^*(BT;\Zo)\}\cong H^*(BT;\Zo)^{|V|}.
\]
Now we prove Proposition \ref{propXhIsGKM}, which states that
$X_h$ is a GKM-manifold.

\begin{proof}
Equivariant formality is already proved (it follows from Theorem
\ref{thmXhSmoothEvenCells} and remark \ref{remOddIsZero}). Let
$t=(t_1,\ldots,t_n)\in T^n$. The torus action on $X_h$ is given by
$a_i\mapsto a_i$, $b_{ij}\mapsto t_it_j^{-1}b_{ij}$. Fixed points
of this action are the diagonal matrices: $X_h^T=\{A_\sigma,
\sigma\in\Sigma_n\}$ which is a finite set.

Consider the characters $\epsilon_i\in\Hom(T^n;U(1))$,
$\epsilon_i((t_1,\ldots,t_n))=t_i$, and let
$\epsilon_{ij}=\epsilon_i-\epsilon_j$. From the linearization
\eqref{eqLinearization} it can be seen that the tangent
representation of a torus $T^n$ in a fixed point $A_\sigma$ has
the form
\[
T_{A_\sigma}X_h=\bigoplus_{i<j\leqslant h(i)}V(\epsilon_{ij})
\]
Hence the weights of tangent representation are pairwise
non-collinear.

Condition (4) in the definition of GKM-manifold is quite
technical: usually it is not checked at all. For complex
GKM-manifolds it holds automatically according to
Bialynicki-Birula method, \cite{BB} (the dynamics $\lim_{t\to0}tx$
allows to reach a fixed from any given point $x$ on a manifold, a
generic algebraic subgroup $\Co^\times\subset (\Co^\times)^n$ and
$t\subset (0,1]\subset \Co^\times$).

The condition (4) is checked for $X_h$ by the following simple
consideration. Assume the condition violates, i.e. we have a
closed surface in $X_h$ consisting entirely of 1-dimensional
orbits and no fixed points. Since $X_h$ is a $T$-invariant
submanifold of $\Fl_n$, this surface lies in $\Fl_n$ as well.
However, it the complete flag variety $\Fl_n$ is a GKM-manifold
(see \cite{GHZ}) so the condition (4) holds for this space.
\end{proof}

It is easily checked that fixed points $A_\sigma$ and $A_\tau$ of
the manifold $X_h$ are connected by a $T$-invariant 2-sphere if
and only if
\[
\tau=\sigma\cdot(i,j)\quad i<j\leqslant h(i) \mbox{ либо }
j<i\leqslant h(j),
\]
where $(i,j)$ is a transposition. Such sphere consists of matrices
of the form
\[
\begin{tikzpicture}[decoration=brace] \matrix (m) [matrix of
math nodes,left delimiter={(},right delimiter={)}] {
\lambda_{\sigma(1)} & \cdots &  & \cdots & 0\\
\vdots & \ast & 0 & \ast & \vdots\\
 & 0 & \ddots &  0 & \\
\vdots & \ast & 0 & \ast & \vdots \\
0 & \cdots &  & \cdots & \lambda_{\sigma(n)} \\
};

\draw (m-1-2.north) node [yshift=6pt] {$i$}; \draw (m-1-4.north)
node [yshift=6pt] {$j$};

\draw (m-2-1.west) node [xshift=-6pt] {$i$}; \draw (m-4-1.west)
node [xshift=-6pt] {$j$};
\end{tikzpicture}
\]
where the block in the intersection of $i$-th and $j$-th rows and
columns has eigenvalues $\lambda_{\sigma(i)}=\lambda_{\tau(j)}$
and $\lambda_{\sigma(j)}=\lambda_{\tau(i)}$, and all other
eigenvalues are distributed along the diagonal according to the
permutation $\sigma$. GKM-theorem gives the following result.

\begin{prop}\label{propEquivCohomXh}
There is an isomorphism of $H^*(BT^n;\Zo)$-algebras
\[
H^*_T(X_h;\Zo)\cong\left\{\phi\colon \Sigma_n\to H^*(BT^n;\Zo)\mid
\pbox{\textwidth}{$\phi(\sigma)\equiv\phi(\tau)\mod
(\epsilon_i-\epsilon_j)$\\ if $\tau=\sigma\cdot(i,j)$ and
$i<j\leqslant h(i)$ or $j<i\leqslant h(j)$ }\right\}
\]
\end{prop}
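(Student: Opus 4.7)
The plan is to apply the Goresky--Kottwitz--MacPherson theorem stated above to $X_h$, whose GKM-property has already been established in Proposition~\ref{propXhIsGKM}. This reduces the proof to an explicit description of the GKM-graph $G(X_h)=(V,E,\alpha)$ and a direct substitution into the general formula; the right-hand side of the claim is precisely that substitution.

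The vertex set and the tangent weights at each vertex are already at hand from the proof of Proposition~\ref{propXhIsGKM}. Namely, the fixed point set is $V=\{A_\sigma\mid\sigma\in\Sigma_n\}$, which we canonically identify with $\Sigma_n$; and from the linearization \eqref{eqLinearization} the isotropy representation at $A_\sigma$ decomposes as $T_{A_\sigma}X_h\cong\bigoplus_{i<j\leqslant h(i)}V(\epsilon_{ij})$ with $\epsilon_{ij}=\epsilon_i-\epsilon_j$ pairwise non-collinear.

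The key remaining step is to match each tangent weight with a $T$-invariant $2$-sphere. For a pair $i<j\leqslant h(i)$ and a permutation $\sigma$, I would construct this sphere by freezing every diagonal entry of $A_\sigma$ at positions $k\neq i,j$ to the value $\lambda_{\sigma(k)}$, freezing every off-diagonal entry outside positions $(i,j)$ and $(j,i)$ to zero, and letting the $2\times 2$ principal submatrix at rows and columns $\{i,j\}$ vary over all Hermitian matrices with spectrum $\{\lambda_{\sigma(i)},\lambda_{\sigma(j)}\}$. The resulting locus is diffeomorphic to the $n=2$ isospectral Hermitian manifold, which is $S^2$; its two $T$-fixed points are $A_\sigma$ and $A_{\sigma\cdot(i,j)}$; and the coordinate formula $b_{ij}\mapsto t_it_j^{-1}b_{ij}$ shows that $T^n$ acts on it through the character $\epsilon_{ij}$. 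That this sphere lies inside $X_h$ (rather than only in $\Fl_n$) is exactly the content of the Hessenberg condition $i<j\leqslant h(i)$; by \eqref{eqHGinterchange} this is symmetric under swapping the roles of $i$ and $j$, which accounts for the clause "$j<i\leqslant h(j)$" in the statement.

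Since this construction produces exactly one invariant $2$-sphere for every tangent weight at every vertex, it exhausts the edges of $G(X_h)$, and the GKM-theorem delivers the claimed presentation of $H^*_T(X_h;\Zo)$. The only substantive step is the explicit identification of the invariant $2$-sphere above and the verification that it lies in $X_h$; once its existence and its character $\epsilon_{ij}$ are settled, the rest is formal bookkeeping inside the GKM formalism.
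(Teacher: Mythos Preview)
Your proposal is correct and follows essentially the same route as the paper: the paper also deduces the proposition directly from the GKM theorem after identifying the fixed points $A_\sigma$, the tangent weights $\epsilon_{ij}$ from the linearization \eqref{eqLinearization}, and the invariant $2$-spheres as the loci where only the $2\times2$ block at rows and columns $\{i,j\}$ varies (with spectrum $\{\lambda_{\sigma(i)},\lambda_{\sigma(j)}\}$) while the remaining diagonal entries are frozen. Your counting argument that these spheres exhaust the edges of $G(X_h)$ is a slight elaboration of what the paper leaves as ``easily checked,'' but the substance is identical.
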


Now we prove Theorem \ref{thmXYconnection} about the connection
between $X_h$ and Hessenberg variety~$Y_h$. In the first point we
also prove Theorem \ref{thmInUnitary}.

\begin{proof}
(1) Let us show that $X_h/T\cong Y_h/T$. We develop the idea of
\cite{BFR} used to relate the space of isospectral tridiagonal
matrices and toric variety of type $A_n$. Given an arbitrary
unitary matrix $U\in U(n)$ consider the flag
\[
F^U_\bullet=(F^U_1\subset F^U_2\subset\cdots\subset F^U_n),
\]
where $F^U_i\subset \Co^n$ is the subspace spanned by first $i$
columns of $U$. Let $E_\bullet=(E_1\subset E_2\subset\cdots\subset
E_n)$ be the fixed flag of coordinate subspaces, $E_i=\langle
e_1,\ldots,e_i\rangle$. Then we have $F^U_i=U(E_i)$. Vice versa,
each flag $V_\bullet$ determines a matrix $U$ such that
$F^U_\bullet=V_\bullet$, however this matrix is defined up to
right multiplication by diagonal matrices $\diag(t_1,\ldots,t_n)$,
$|t_i|=1$.

Note that $U(n)$ has both right and left free action of the
maximal torus $T^n$ (by right and left multiplication). Consider
the submanifold
\[
Z_h=\{U\in U(n)\mid U^{-1}\Lambda U\in M_h\},
\]
where $\Lambda=\diag(\lambda_1,\ldots,\lambda_n)$. The manifold
$Z_h$ is preserved by left and right actions, and by definition we
have
\[
\raisebox{-1pt}{$T^n$}\backslash\raisebox{1pt}{$Z_h$}=M_h\cap
M_\Lambda=X_h.
\]
The condition $U\in Z_h$ implies $U^{-1}\Lambda U (E_i)\subset
E_{h(i)}$, or equivalently $\Lambda U(E_i)\subset U(E_{h(i)})$.
Hence $\Lambda(F^U_i)\subset F^U_{h(i)}$ and we have
\[
\raisebox{1pt}{$Z_h$}/\raisebox{-1pt}{$T^n$}=\{F^U_\bullet\mid
U\in Z_h\}=Y_h.
\]
It can be seen that both orbit spaces $X_h/T^n$ and $Y_h/T^n$
coincide with the double quotient
\raisebox{-1pt}{$T^n$}$\backslash$\raisebox{1pt}{$Z_h$}$/$\raisebox{-1pt}{$T^n$}.
Filtrations by orbit dimensions coincide with the corresponding
filtration on the space
\raisebox{-1pt}{$T^n$}$\backslash$\raisebox{1pt}{$Z_h$}$/$\raisebox{-1pt}{$T^n$},
which proves the first statement.

In fact we used the following idea. We consider a map from the set
of Hermitian matrices with fixed spectrum to itself, given by the
rule
\[
L=U\Lambda U^{-1}\mapsto U^{-1}\Lambda U=\tilde{L}.
\]
In a sense, this rule maps $X_h$ to $Y_h$ and vice versa. However
this map is ill-defined: on one side $U$ is defined up to right
action of torus, and on the other side it is defined up to left
action. The ambiguity vanishes after passing to double quotient.

(2) The unlabeled GKM-graph of $X_h$ is described earlier in this
section. Its vertices are all permutations $\sigma\in E$, and two
vertices $\sigma$ and $\tau$ are incident if $\tau=\sigma(i,j)$
and either $i<j\leqslant h(i)$ or $j<i\leqslant h(j)$. The
unlabeled GKM-graph of $Y_h$ is the same, see \cite{AHM}.

(3) We recall the result of \cite{AHM} on the structure of
equivariant cohomology ring of a semisimple Hessenberg variety:
\[
H^*_T(X_h;\Zo)\cong\left\{\phi\colon \Sigma_n\to
H^*(BT^n;\Zo)\left|
\pbox{\textwidth}{$\phi(\sigma)\equiv\phi(\tau)\mod
(\epsilon_{\sigma(i)}-\epsilon_{\sigma(j)})$\\ если
$\tau=\sigma\cdot(i,j)$ и $i<j\leqslant h(i)$  либо $j<i\leqslant
h(j)$ }\right.\right\}
\]
This differs from the case of $X_h$ (proposition
\ref{propEquivCohomXh}) only in relation
$\epsilon_{\sigma(i)}-\epsilon_{\sigma(j)}$ instead of relation
$\epsilon_i-\epsilon_j$. The isomorphism of rings can be
constructed explicitly. Let
\[
R=\{\Sigma_n\to H^*(BT)\}=\bigoplus_{\sigma\in\Sigma_n}
H^*(BT)_\sigma,
\]
where $H^*(BT)_\sigma$ is the copy of the ring
$H^*(BT;\Zo)\cong\Zo[\epsilon_1,\ldots,\epsilon_n]$,
$\deg\epsilon_i=2$. Define the homomorphism $\twist\colon R\to R$,
which acts on the summand $H^*(BT)_\sigma$ by switching the
generators according to permutation $\sigma$:
\[
\twist|_{H^*(BT)_\sigma}\colon
\Zo[\epsilon_1,\ldots,\epsilon_n]\to
\Zo[\epsilon_1,\ldots,\epsilon_n],\qquad \epsilon_i\mapsto
\epsilon_{\sigma(i)}.
\]
It can be seen that $\twist$ induces the isomorphism of the
subring $H^*_T(X_h;\Zo)\subset R$ onto the subring
$H^*_T(Y_h;\Zo)\subset R$.

Note that there is a simpler implicit proof which uses the
construction of (1). Indeed, both manifolds $X_h$ and $Y_h$ are
the quotient of the same manifold $Z_h$ by free action of a torus.
Hence both rings $H^*_T(X_h;\Zo)$, $H^*_T(Y_h;\Zo)$ are isomorphic
to $H^*_{T\times T}(Z_h;\Zo)$.

(4) For an equivariantly formal space $X$ the equivariant
cohomology is a free module over $H^*(BT)$, and there is an
isomorphism $H^*(X)\cong H^*_T(X)\otimes_{H^*(BT)}\Zo$. It follows
that $\sum_i\rk H^{i}_T(X)t^{i}=\dfrac{\sum_i\rk
H^{i}(X)t^{i}}{(1-t^2)^n}$. Since both spaces $X_h$, $Y_h$ are
equivariantly formal, the coincidence of their Betti numbers
follows from the isomorphism of equivariant cohomology rings:
\[
\dfrac{\sum_k\rk H^{2k}(X_h)t^{2k}}{(1-t^2)^n}=\sum_k\rk
H^{2k}_T(X_h)t^{2k}=\sum_k\rk H^{2k}_T(Y_h)t^{2k}=\dfrac{\sum_k\rk
H^{2k}(Y_h)t^{2k}}{(1-t^2)^n}.
\]
Also note that Betti number $\beta_{2k}(X_h)=\rk H^{2k}(X_h;\Zo)$
equals the number of equilibria points of index $2k$, that is
\[
\beta_{2k}(X_h)=\sharp\{\sigma\in \Sigma_n\mid
\sharp\{i<j\leqslant h(i)\mid \sigma(j)<\sigma(i)\}=k\}.
\]
\end{proof}

\begin{rem}
Floyd theorem was used in \cite{dMP} to attack the real case. It
was proved that
\[
\dim H^k(X_h^\Ro;\Zt)=\beta_{2k}(X_h).
\]
One can also define the real analog of Hessenberg manifold. Let
$Y_h^\Ro$ be the manifold consisting of all real flags $V_\bullet$
such that $\Lambda V_i\subset V_{h(i)}$. This manifold also
carries the action of the group $\Zt^n$ and Floyd theorem implies
a similar result:
\[
\dim H^k(X_h^\Ro;\Zt)\cong \dim H^k(Y_h^\Ro;\Zt).
\]

It should be noted that manifolds $X_h^\Ro$ and $Y_h^\Ro$ are
generally not diffeomorphic, which is similar to example
\ref{exXYareDifferent}. For $n=3$ and $h=\hmin$ the manifold
$X_h^\Ro$ is a sphere with two handles, however $Y_h^\Ro\cong
4\RP^2$ is a nonorientable surface.
\end{rem}

\section{Cohomology rings}

Notice that for the Hessenberg function $\hmin$, the manifold
$X_{\hmin}$ is a quasitoric manifold. The cohomology ring and
equivariant cohomology ring of a quasitoric manifold can be
written in terms of generators and relations, see \cite{DJ}. In
particular, it is known that both rings are generated by elements
of degree two.

Also note that the maximal possible Hessenberg function $\hm(i)=n$
gives $X_{\hm}=\Fl_n$. The cohomology ring of a complete flag
manifold is also generated in degree two. Is it true that
$H^*(X_h)$ is generated by $H^2(X_h)$ in general? The answer is
negative.

\begin{prop}
There exists Hessenberg functions $h$ such that $H^*(X_h)$ has
generators of degrees $>2$.
\end{prop}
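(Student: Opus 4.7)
\textit{Proof plan.} I would produce an explicit Hessenberg function $h$ and verify by direct computation that $H^*(X_h;\Qo)$ requires a generator of degree $>2$. Since for $n\le 3$ every indecomposable $X_h$ is either $\Fl_n$ or a quasitoric manifold -- both generated in degree~$2$ -- the natural place to look is $n=4$, and I would test in turn the three intermediate functions $h=(2,4,4,4)$, $(3,3,4,4)$, $(3,4,4,4)$.

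The computational machinery is already in hand. By equivariant formality $H^*(X_h)\cong H^*_T(X_h)\otimes_{H^*(BT)}\Zo$, and Proposition~\ref{propEquivCohomXh} realises $H^*_T(X_h)$ as the subring of $\bigoplus_{\sigma\in\Sigma_n}\Zo[\epsilon_1,\dots,\epsilon_n]$ cut out by the GKM edge congruences. In outline I would: (i)~draw the GKM graph (vertices $\Sigma_n$, edges the distinguished transpositions $(i,j)$ with $i<j\le h(i)$ or $j<i\le h(j)$); (ii)~record the rank of $H^{2k}(X_h)$ in each degree via the inversion formula appearing in the proof of Theorem~\ref{thmXYconnection}(4); (iii)~identify $H^2(X_h)$ and compute the subring $R\subseteq H^*(X_h)$ that it generates; (iv)~exhibit a degree $2k$ with $\dim_\Qo R^{2k}<\beta_{2k}(X_h)$, forcing a new algebra generator of degree $2k>2$.

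The main obstacle is step~(iii). It requires multiplying $\Sigma_n$-indexed tuples of polynomials subject to the GKM congruences and then quotienting by the ideal $H^+(BT)$, which is a finite but genuinely tedious bookkeeping exercise; one cannot short-circuit it by appealing to a structural presentation. Indeed, the reason the obstruction should appear for intermediate $h$ but not at the extremes is structural: for $h=\hmin$ the orbit space $X_h/T$ is the permutohedron, so the Stanley--Reisner presentation of a quasitoric manifold forces generation in degree~$2$; for $h=\hm$ the Borel presentation of $\Fl_n$ does the same via Chern classes of tautological bundles. For intermediate $h$ the orbit space is no longer a simple polytope and the Borel-type subring generated by the image of $H^2(BT)\to H^2(X_h)$ does not exhaust $H^*(X_h)$, so that certain Thom classes of index-$4$ unstable manifolds of the Toda flow (Section~\ref{secStaircaseMatrAreManifolds}) must become genuinely new generators. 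I expect the explicit failure of degree-$2$ generation to surface already at $k=2$ in at least one of the three candidate examples above.
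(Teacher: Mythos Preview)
Your plan is a plausible brute-force strategy, but it is not actually carried out, and it misses a short-circuit that the paper exploits. You invoke Theorem~\ref{thmXYconnection} only for Betti numbers, overlooking that part~(3) gives a graded \emph{ring} isomorphism $H^*_T(X_h)\cong H^*_T(Y_h)$. The paper's observation is that for an equivariantly formal $T$-space $X$ one has $H^*(X)=H^*_T(X)/J$ with $J$ generated in degree~$2$, and a short induction on degree shows that $H^*(X)$ is generated in degree~$2$ if and only if $H^*_T(X)$ is. Hence $H^*(X_h)$ is generated in degree~$2$ if and only if $H^*(Y_h)$ is. One then cites \cite{AHM}: for $h=(h(1),n,\dots,n)$ with $h(1)\geqslant 3$ the ring $H^*(Y_h)$ requires a generator in degree $2(h(1)-1)>2$, and the proposition follows with no computation at all.

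So your assertion that ``one cannot short-circuit it by appealing to a structural presentation'' is exactly where the paper disagrees with you: the transfer to $Y_h$ via equivariant cohomology \emph{is} the short-circuit. Your direct GKM computation for $n=4$ would presumably succeed for $h=(3,4,4,4)$ (this is of the form treated in \cite{AHM}, so by the paper's argument it must work), but as written your proposal is only a sketch of a computation you have not performed, and the heuristic about Thom classes of unstable manifolds is suggestive but not a proof. The paper's route, by contrast, is two lines once Theorem~\ref{thmXYconnection}(3) is available.
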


\begin{proof}
For equivariantly formal space $X$, the ring $H^*(X)$ is a
quotient of $H^*_T(X)$ by the ideal generated in degree $2$.
Therefore, $H^*(X)$ is generated in degree $2$ whenever $H^*_T(X)$
is generated in degree $2$. Now, since $H^*_T(X_h)$ is isomorphic
to $H^*_T(Y_h)$ as ring, we see that $H^*(X_h)$ is generated in
degree $2$ if and only if $H^*(Y_h)$ is generated in degree $2$.
It was shown in \cite{AHM} that the Hessenberg function
$h=(h(1),n,\ldots,n)$ produces the Hessenberg variety $Y_h$ whose
cohomology ring has generators of degree $2(h(1)-1)$. Therefore,
the cohomology ring of the corresponding manifold $X_h$ is not
generated in degree two as well.
\end{proof}

\end{document}